\documentclass{amsart}[12pt]
\usepackage{xcolor}
\definecolor{linkblue}{rgb}{0,0.2,0.6}
\def\docpdftitle{Invariant polynomials and Mukai models}

\usepackage[backrefs]{amsrefs}
\usepackage{mathrsfs}
\usepackage{enumitem}
\usepackage{tikz}
\usepackage[
	hyperindex,
	pdftex,
	pdftitle={\docpdftitle},
	pdfdisplaydoctitle,
	pdfpagemode=UseNone,
	breaklinks=true,
	extension=pdf,
	bookmarks=false,
	plainpages=false,
	colorlinks,
	linkcolor=linkblue,
	citecolor=linkblue,
	urlcolor=linkblue,
	pdfmenubar=true,
	pdftoolbar=true,
	pdfpagelabels,
	pdfpagelayout=SinglePage,
	pdfview=Fit,
	pdfstartview=Fit
]{hyperref}

\newtheorem{theorem}{Theorem}[section]
\newtheorem{lemma}[theorem]{Lemma}
\newtheorem{proposition}[theorem]{Proposition}

\theoremstyle{definition}
\newtheorem{definition}[theorem]{Definition}

\numberwithin{equation}{section}

\newcommand{\Sym}{\operatorname{Sym}}

\newcommand{\SL}{\operatorname{SL}}

\newcommand{\OG}{\operatorname{OG}(5,10)}

\newcommand{\Sp}{\operatorname{Sp}}
\newcommand{\SpGr}{\operatorname{SpGr}(3,6)}
\newcommand{\Spin}{\operatorname{Spin}}
\newcommand{\Gr}{\operatorname{Gr}}

\newcommand{\git}{/\!\!/}
\newcommand{\M}{\overline{M}}
\newcommand{\mywedge}{\mathsf{\Lambda}}

\newcommand{\mukai}{\mathrm{MX}}
\newcommand{\LieG}{G}
\newcommand{\AffX}{\Gamma}

\newcommand{\GT}{\operatorname{GT}}
\newcommand{\wt}{\operatorname{wt}}

\begin{document}

\title{Invariant polynomials and Mukai's models of moduli spaces of
  curves and K3 surfaces}


\author{David Swinarski}
\address{Department of Mathematics\\ 
Fordham University\\ 
441 E Fordham Rd\\ 
Bronx, NY 10458}
\email{dswinarski@fordham.edu}


\date{}

\begin{abstract}
Beginning in the 1980s, Mukai introduced birational models of some
moduli spaces of curves and some moduli spaces of K3 surfaces. They
are defined as geometric invariant theory quotients. Little is known
about the boundaries of these spaces. We describe an approach to
efficiently evaluate certain invariant polynomials associated to these
GIT problems. This allows us to show that several singular curves and
surfaces are GIT semistable in Mukai's models. In the appendix, we
give a combinatorial formula for an $\SL_n$-invariant in terms of the
Gelfand-Tsetlin basis. 
\end{abstract}

\maketitle

\section{Introduction}

In a series of papers beginning in 1988, Mukai introduced birational
models $\mukai^{1}_g$ of some moduli spaces
of curves, and birational models $\mukai^{2}_g$ of some moduli spaces
of K3 surfaces
\cites{Mukai1988,Mukai1992a,Mukai1992b,Mukai1995,Mukai2010}\footnote{There
is no standard notation in the literature for Mukai's models. We
propose $\mukai^{1}_g$ and $\mukai^{2}_g$, where the
$\mathrm{M}$ and $\mathrm{X}$ are read as upper case Greek letters:
Mu-Chi.}. For curves, Mukai's models $\mukai^{1}_g$ are
birational to $\overline{M}_g$, the moduli space of Deligne-Mumford
stable curves of genus $g$, for $g=7,8,9$. For genus $10$, Mukai also
gives a model of the moduli space of curves that lie on a K3 surface;
these form a divisor in $\overline{M}_{10}$.  For surfaces, Mukai's
models $\mukai^{2}_g$ 
are models of the moduli spaces of polarized K3 surfaces of genus
$g$ for $g=7,8,9,10$.

Each of Mukai's models is defined as a geometric invariant theory
(GIT) quotient. These descriptions will be given in Section \ref{sec:
  GIT problems}. What are the GIT stable, semistable, and unstable loci for these GIT problems? 

A few foundational results are known for these GIT problems. For K3 surfaces, Greer,
Li, and Tian prove that smooth, Brill-Noether general K3 surfaces are
GIT stable in Mukai's models \cite{GLT}*{Thm.~2.9}. For curves, Farkas and Verra prove
that a general smooth or irreducible nodal curve with at most $g-1$
nodes is GIT semistable in a GIT problem that is closely related to
Mukai's model \cite{FV}*{Prop.~3.5}; as a corollary, the same statement is true for Mukai's
models.  However, many questions remain open regarding which
objects may appear on the boundary of Mukai's models.

In \cite{Swinarski2025} the author gave examples of three singular
curves that appear in Mukai's model of $\overline{M}_7$. 
\begin{center}
\begin{tabular}{ll}
Genus 7, Example 1: & the $7$-cuspidal curve with heptagonal symmetry\\
Genus 7, Example 2: & the balanced ribbon of genus 7\\
Genus 7, Example 3: & a family of genus 7 graph curves
\end{tabular}
\end{center}

The proofs of semistability in \cite{Swinarski2025} are classical and
explicit: find an invariant polynomial $F$ of Mukai’s GIT problem and
a point $[P]$ parameterizing the curve of interest and show that
$F([P]) \neq 0$ by direct calculation.

In this work, we improve and expand on the ideas introduced in
\cite{Swinarski2025}. Here, we describe a different method for
constructing and evaluating invariant polynomials that is vastly more
efficient than the approach used in \cite{Swinarski2025}. We
implement this method in \texttt{Macaulay2}, and use our code to show that several
examples in each of Mukai's models are GIT semistable. This yields the
following theorem. 

\begin{theorem} \label{thm:main}
Each of the following curves or surfaces is GIT semistable in the corresponding Mukai
model.

Genus 7:
\begin{enumerate}
  \item[(a)] The genus 7 tangent developable
  \item[(b)] The genus 7 cuspidal cubic with 7-gonal symmetry
  \item[(c)] The genus 7 balanced K3 carpet
  \item[(d)] The genus 7 balanced ribbon
  \item[(e)] The genus 7 graph curve for the graph $\Gamma_7$
\end{enumerate}

Genus 8: 
\begin{enumerate}
  \item[(a)] The genus 8 tangent developable
  \item[(b)] The genus 8 cuspidal cubic with 8-gonal symmetry
  \item[(c)] A genus 8 reducible surface with four components, each
   generically nonreduced
  \item[(d)] A genus 8 reducible curve with five components, each
  generically nonreduced
  \item[(e)] The genus 8 graph curve for the graph $\Gamma_8$
\end{enumerate}

Genus 9:
\begin{enumerate}
  \item[(a)] The genus 9 tangent developable
  \item[(b)] The genus 9 cuspidal cubic with 9-gonal symmetry   
  \item[(c)] The genus 9 balanced K3 carpet
  \item[(d)] The genus 9 balanced ribbon
  \item[(e)] The genus 9 graph curve for the graph $\Gamma_9$
\end{enumerate}

Genus 10:
\begin{enumerate}
  \item[(a)] The genus 10 tangent developable
  \item[(b)] The genus 10 cuspidal cubic with 10-gonal symmetry
  \item[(c)] A genus 10 reducible surface with two components, each generically nonreduced
  \item[(d)] A genus 10 reducible curve with four components, each generically nonreduced
\end{enumerate}

The graphs $\Gamma_7$, $\Gamma_8$, and $\Gamma_9$ are shown in  Figure
\ref{fig:ThreeGraphs}. See Theorem \ref{thm: points in parameter
  space} for the equations defining examples 8c, 8d, 10c, and 10d.   

\end{theorem}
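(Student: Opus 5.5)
The proof follows the classical strategy recalled above from \cite{Swinarski2025}. For each genus $g\in\{7,8,9,10\}$ and each listed example, we exhibit a homogeneous invariant polynomial $F$ of Mukai's GIT problem and a representative point $P$ of the affine cone parameterizing the curve or surface. We then verify $F(P)\neq 0$. Semistability follows directly from the definition, since a point is semistable exactly when some invariant of positive degree does not vanish on it.

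\textbf{Setup.} Following Section \ref{sec: GIT problems}, each Mukai model is a quotient of a linear section of a homogeneous variety. These are $\OG$ for $g=7$, $\Gr(2,6)$ for $g=8$, $\SpGr$ for $g=9$, and the $G_2$-variety for $g=10$. The data of the section is a point in a Grassmannian of subspaces of the representation $V$ of $\LieG$. After the Plücker embedding, this becomes a point $P\in \mywedge^k V^\vee$, modulo the action of $\SL(V)$ or $\SL$ of the ambient space. For each example:
\begin{itemize}
\item We write down explicit linear equations cutting out the curve or surface from $\mukai$'s homogeneous variety. For the special examples 8c, 8d, 10c, 10d we use Theorem \ref{thm: points in parameter space}.
\item We check that these equations indeed give the stated object, e.g.\ by a Gröbner basis computation of the ideal and its primary decomposition.
\item We compute the Plücker coordinates of $P$.
\end{itemize}
Symmetric examples need only a few nonzero coordinates: the cuspidal curves with $\mathbb{Z}/g$ symmetry, the tangent developables, the balanced ribbons and carpets. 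This sparsity is important for efficiency.

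\textbf{Invariants.} We produce invariants by the method developed in the body of the paper. Choose a weight-space basis of $V$ adapted to a maximal torus of $\LieG$; this is a Gelfand--Tsetlin-type basis, as in the appendix for $\SL_n$. Invariants of degree $d$ are then obtained as contractions of $d$ copies of the Plücker tensor against $\LieG$-invariant tensors, such as the quadratic form, the symplectic form, the $G_2$ three-form, or the determinant. Evaluation at $P$ can be organized as a sum over weight-compatible index tuples only, because only torus-weight-zero monomials survive. Combined with the sparsity of $P$, this reduces each evaluation to a manageable finite sum, which we carry out in \texttt{Macaulay2} over $\mathbb{Q}$ or a finite field. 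A nonzero value modulo $p$ suffices, since the invariant has integer coefficients.

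\textbf{Main obstacle.} The main difficulty is finding, for each example, an invariant of small enough degree that does not vanish at $P$. Highly symmetric points often lie in the zero locus of low-degree invariants, and the dimension of the space of invariants grows quickly with degree. We will first try the lowest-degree invariants given by Hilbert series or character computations. If these all vanish, we will exploit the stabilizer of $P$ (for example the $\mathbb{G}_m$ or $\mathbb{Z}/g$ symmetry) to restrict to invariants of the stabilizer-fixed subspace, where the Luna slice criterion reduces the problem to a smaller GIT problem. There we search for nonvanishing invariants by random linear combinations of contractions. The remaining cases are then finite verifications.
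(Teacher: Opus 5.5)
Your outline has the same logical skeleton as the paper's proof. That proof consists entirely of evaluating, by computer, one fixed invariant $F_{g,d}$ from Section~\ref{sec: invariant polynomials} at the point given in Theorem~\ref{thm: points in parameter space}, and obtaining a nonzero value. You may take all nineteen points from that theorem, not only 8c, 8d, 10c, 10d. For the named objects, the identification there is an explicit projective isomorphism with a known ideal, which a primary decomposition alone would not give you.

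You genuinely differ in how the invariant is produced, and that is where the work lies. The paper fixes in advance one invariant per problem that factors through an irreducible summand of $\mywedge^k V_g^{*}$, usually a small one. For instance, $F_{7,1}$ is the projection $\mywedge^7V_7^{*}\to V(3,0,0,0,1)$, followed by $\Sym^2 V(3,0,0,0,1)\to V(3,0,0,0,0)$ and then the quadratic invariant. These are built from highest weight vectors and lowering operators, and $F_{8,2}$ comes from the Gelfand--Tsetlin formula of the appendix. In your language, they are contraction networks that pass through one irreducible summand. Each evaluation is therefore a projection of the Pl\"{u}cker vector followed by a low-degree form, and the marginal cost per example is negligible.

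Your generic contractions summed over weight-zero index tuples have no such structure. You also never say which contraction to evaluate for any example. The paper's experience shows that this choice, not the logic, is the real difficulty: four 768GB machines ran for 36 hours on a less targeted genus 7 invariant, and the general-purpose route to $F_{8,2}$ failed outright. Yet $F_{8,2}$ is the only one of the paper's four candidates for $\mukai^2_8$ that does not vanish on 8c.

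Two details of your search also need repair. First, the lowest degree seen by a character computation on $\Sym^d\mywedge^kV_g^{*}$ can be spurious, because those invariants may lie in the Pl\"{u}cker ideal. The degree-3 invariants for $\mukai^1_8$ appear to, at least partly, which is why the paper uses degree 6. Second, $V_7$ is a half-spin representation and carries no invariant quadratic form. For $\Spin(10)$ you must contract through $\Sym^2V_7\to\mathbb{C}^{10}$ and the form on $\mathbb{C}^{10}$.

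Conversely, your stabilizer fallback is stronger than you make it. The right tool is Kempf's theorem on optimal destabilizing one-parameter subgroups, not the Luna slice theorem. It says: if a reductive $H\subseteq G_g$ fixes the affine Pl\"{u}cker vector, the point is $G_g$-semistable if and only if it is $Z_{G_g}(H)$-semistable. If $H$ is the image of a regular cocharacter, this becomes a check on a maximal torus $T$, and no invariant is needed.

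For 9c, take the cocharacter $(3,2,1)$ of Section~\ref{subsec:ribbon}. It is regular in $\Sp(6)$ and fixes $\omega=z_0\wedge z_{13}\wedge(z_3+\frac12 z_4)\wedge(z_9+\frac12 z_{10})$, which represents 9c via $\mywedge^{10}V_9\cong\mywedge^4V_9^{*}$; the four factors have $(3,2,1)$-weights $\pm 6$ and $\pm 2$, summing to $0$. The coefficient of $z_0\wedge z_{13}\wedge z_3\wedge z_9$ in $\omega$ is $1$. Its torus weight is $0$, because $z_0,z_{13}$ and $z_3,z_9$ have opposite weights $\pm(L_1+L_2+L_3)$ and $\pm L_2$. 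So 9c is $T$-semistable, hence semistable, by hand. Wedging with $z_1-4z_7$, whose terms have $(3,2,1)$-weight $0$ and torus weights $\pm(L_1-L_2-L_3)$, handles 9d the same way.

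The paper's invariants need no knowledge of stabilizers. That is what lets them treat uniformly examples such as 8c, 8d, 10c, 10d, which were found by computer search.
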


Here are the trivalent graphs whose graph curves we study. The genus 8
graph below is known as the Heawood graph. 

\begin{figure}[h]
      \caption{Three trivalent graphs}
        \label{fig:ThreeGraphs}    
  \begin{center}
    \begin{tabular}{ccc}
$\Gamma_7 $ & $\Gamma_8$ & $\Gamma_9$ \\
\begin{tikzpicture}[scale=0.25]
  \draw (0,0)--(1,0); 
  \draw (0,0) arc (180:360:10 and 2);
  \draw (0,0)--(20,0); 
  \draw (0,0)--(2,5); 
  \draw (2,0)--(4,0); 
  \draw (2,0)--(10,5); 
  \draw (4,0) -- (8,0); 
  \draw (4,0)--(18,5); 
  \draw (8,0)--(10,0); 
  \draw (8,0)--(2,5); 
  \draw (10,0)--(12,0); 
  \draw (10,0)--(10,5); 
  \draw (12,0) -- (16,0); 
  \draw (12,0)--(18,5); 
  \draw (16,0)--(18,0); 
  \draw (16,0)--(2,5); 
  \draw (18,0)--(20,0); 
  \draw (18,0)--(10,5); 
  \draw (20,0)--(18,5); 
  \filldraw[white] (0,0) circle (0.25);
  \filldraw[white] (2,0) circle (0.25);
  \filldraw[white] (4,0) circle (0.25);
  \filldraw[white] (8,0) circle (0.25);
  \filldraw[white] (10,0) circle (0.25);
  \filldraw[white] (12,0) circle (0.25);
  \filldraw[white] (16,0) circle (0.25);
  \filldraw[white] (18,0) circle (0.25);
  \filldraw[white] (20,0) circle (0.25);
  \filldraw[white] (2,5) circle (0.25);
  \filldraw[white] (10,5) circle (0.25);
  \filldraw[white] (18,5) circle (0.25);
  \draw (0,0) circle (0.25);
  \draw (2,0) circle (0.25);
  \draw (4,0) circle (0.25);
  \draw (8,0) circle (0.25);
  \draw (10,0) circle (0.25);
  \draw (12,0) circle (0.25);
  \draw (16,0) circle (0.25);
  \draw (18,0) circle (0.25);
  \draw (20,0) circle (0.25);
  \draw (2,5) circle (0.25);
  \draw (10,5) circle (0.25);
  \draw (18,5) circle (0.25);    
\end{tikzpicture} &     \begin{tikzpicture}[scale=1]
  \draw (1,0)--(0.900969, 0.433884)--(0.62349, 0.781831)--(0.222521, 0.974928)--(-0.222521,0.974928)--(-0.62349, 0.781831)--(-0.900969, 0.433884)--(-1,0)--(-0.900969,-0.433884)--(-0.62349, -0.781831)--(-0.222521, -0.974928)--(0.222521, -0.974928)--(0.62349,-0.781831)--(0.900969, -0.433884)--cycle;
  \draw (1,0)--(-0.62349, 0.781831); 
  \draw (0.62349, 0.781831)--(-1,0); 
  \draw (-0.222521,0.974928)--(-0.62349, -0.781831); 
  \draw (-0.900969, 0.433884)--(0.222521, -0.974928); 
  \draw (-0.900969,-0.433884)--(0.900969, -0.433884); 
  \draw (-0.222521, -0.974928)--(0.900969, 0.433884); 
  \draw (0.62349,-0.781831)--(0.222521, 0.974928); 
  \filldraw[white] (1,0) circle (0.05); 
  \filldraw[white] (0.900969, 0.433884) circle (0.05); 
  \filldraw[white] (0.62349, 0.781831) circle (0.05); 
  \filldraw[white] (0.222521, 0.974928) circle (0.05); 
  \filldraw[white] (-0.222521,0.974928) circle (0.05); 
  \filldraw[white] (-0.62349, 0.781831) circle (0.05); 
  \filldraw[white] (-0.900969, 0.433884) circle (0.05); 
  \filldraw[white] (-1,0) circle (0.05); 
  \filldraw[white] (-0.900969,-0.433884) circle (0.05); 
  \filldraw[white] (-0.62349, -0.781831) circle (0.05); 
  \filldraw[white] (-0.222521, -0.974928) circle (0.05); 
  \filldraw[white] (0.222521, -0.974928) circle (0.05); 
  \filldraw[white] (0.62349,-0.781831) circle (0.05); 
  \filldraw[white] (0.900969, -0.433884) circle (0.05); 
  \draw (1,0) circle (0.05); 
  \draw (0.900969, 0.433884) circle (0.05); 
  \draw (0.62349, 0.781831) circle (0.05); 
  \draw (0.222521, 0.974928) circle (0.05); 
  \draw (-0.222521,0.974928) circle (0.05); 
  \draw (-0.62349, 0.781831) circle (0.05); 
  \draw (-0.900969, 0.433884) circle (0.05); 
  \draw (-1,0) circle (0.05); 
  \draw (-0.900969,-0.433884) circle (0.05); 
  \draw (-0.62349, -0.781831) circle (0.05); 
  \draw (-0.222521, -0.974928) circle (0.05); 
  \draw (0.222521, -0.974928) circle (0.05); 
  \draw (0.62349,-0.781831) circle (0.05); 
  \draw (0.900969, -0.433884) circle (0.05); 
\end{tikzpicture} & \begin{tikzpicture}[scale=0.25]
  \draw (0,0)--(2,0); 
  \draw (2,0)--(4,0); 
  \draw (4,0)--(6,0); 
  \draw (6,0)--(10,0); 
  \draw (10,0)--(12,0); 
  \draw (12,0)--(14,0); 
  \draw (14,0)--(16,0); 
  \draw (16,0)--(20,0); 
  \draw (20,0)--(22,0); 
  \draw (22,0)--(24,0); 
  \draw (24,0)--(26,0); 
  \draw (0,0) arc (180:360:13 and 2); 
  \draw (0,0)--(1,5); 
  \draw (10,0)--(1,5); 
  \draw (20,0)--(1,5); 
  \draw (2,0)--(9,5); 
  \draw (12,0)--(9,5); 
  \draw (22,0)--(9,5); 
  \draw (4,0)--(17,5); 
  \draw (14,0)--(17,5); 
  \draw (24,0)--(17,5); 
  \draw (6,0)--(25,5); 
  \draw (16,0)--(25,5); 
  \draw (26,0)--(25,5); 
  \filldraw[white] (0,0) circle (0.25); 
  \filldraw[white] (2,0) circle (0.25); 
  \filldraw[white] (4,0) circle (0.25); 
  \filldraw[white] (6,0) circle (0.25); 
  \filldraw[white] (10,0) circle (0.25); 
  \filldraw[white] (12,0) circle (0.25); 
  \filldraw[white] (14,0) circle (0.25); 
  \filldraw[white] (16,0) circle (0.25); 
  \filldraw[white] (20,0) circle (0.25); 
  \filldraw[white] (22,0) circle (0.25); 
  \filldraw[white] (24,0) circle (0.25); 
  \filldraw[white] (26,0) circle (0.25); 
  \filldraw[white] (1,5) circle (0.25); 
  \filldraw[white] (9,5) circle (0.25); 
  \filldraw[white] (17,5) circle (0.25); 
  \filldraw[white] (25,5) circle (0.25); 
  \draw (0,0) circle (0.25); 
  \draw (2,0) circle (0.25); 
  \draw (4,0) circle (0.25); 
  \draw (6,0) circle (0.25); 
  \draw (10,0) circle (0.25); 
  \draw (12,0) circle (0.25); 
  \draw (14,0) circle (0.25); 
  \draw (16,0) circle (0.25); 
  \draw (20,0) circle (0.25); 
  \draw (22,0) circle (0.25); 
  \draw (24,0) circle (0.25); 
  \draw (26,0) circle (0.25); 
  \draw (1,5) circle (0.25); 
  \draw (9,5) circle (0.25); 
  \draw (17,5) circle (0.25); 
  \draw (25,5) circle (0.25); 
 \end{tikzpicture}
    \end{tabular}
  \end{center}
  \end{figure}

The examples for genus $g=8,9,10$ were selected as analogues of the genus 7 examples that appeared in 
\cite{Swinarski2025}. The genus 7 examples were selected because they
appear in the literature related to Green's Conjecture and the
Hassett-Keel program. See \cite{Swinarski2025}*{Section 1.1} for more
details and references. However, in our opinion, the most striking
features of the list appearing in Theorem \ref{thm:main} are its
length and breadth, rather than the geometric properties of any
particular items on the list. It is clear that one could use the
methods and code described here to analyze many more examples for these GIT problems.

\subsection*{Outline}
In Section \ref{sec: GIT problems} we describe the GIT problems used
to define each of Mukai's models. In Section \ref{sec: invariant
  polynomials} we describe the invariant polynomials $F_{g,d}$ up to
scaling that we use to prove the main theorem.  In Section \ref{sec:
  finding linear spaces} we describe the points in Mukai's parameter
spaces that yield the examples listed in the main theorem. We also
describe in detail how we found these points for two examples (the
balanced K3 carpet and balanced ribbon in genus 9). In Section
\ref{sec: proof} we prove the main theorem and make some remarks on the computer
calculations involved.

Finally, in the self-contained Appendix, we
prove a combinatorial formula for an $\SL_n$-invariant in the
representation $V(\lambda)\otimes V(\lambda^{*})$ in terms the
Gelfand-Tsetlin bases of $V(\lambda)$ and $V(\lambda^{*})$. This formula is
implemented in the \texttt{LieAlgebraRepresentations} package and used once in the main part of the paper to give a fast
calculation of one step in constructing the polynomial
$F_{8,2}$. This material is presented in an appendix because it is thematically distinct from the rest of the paper.

\subsection*{Acknowledgements}
It is a pleasure to thank  Ian
Morrison, Han-Bom Moon, and Paul Zinn-Justin for many helpful discussions. This work was partially supported by a
Fordham University Faculty Fellowship and a Fordham University Mid-Career Micro-Grant.

\subsection*{Software and code links}
The calculations reported here were performed in
\texttt{Macaulay2} version 1.25.11 \cite{M2}. Many of these use the new
\texttt{LieAlgebraRepresentations} package by the author and Paul Zinn-Justin. In developing the package and the code for this project, we also used 
\texttt{GAP}, \texttt{Magma}, and \texttt{Sage} \cites{GAP, Magma, Sage}.

The input and output files used for the calculations are posted on
the author's webpage \cite{Code}.

\section{The GIT problems associated to Mukai's models } \label{sec: GIT problems}

Each Mukai model is the GIT quotient of a Grassmannian variety by a
simple linear algebraic group. We use the following notation.

\begin{definition} \label{def: Vc notation}
  Let $\mathfrak{g}$ be a simple Lie algebra of rank $n$. Let
  $\omega_1,\ldots,\omega_n$ be the fundamental dominant weights for
  $\mathfrak{g}$, and let $\lambda = \sum c_i \omega_i$ be a dominant
  integral weight. We write $V(c_1,\ldots,c_n)$ for the irreducible
  $\mathfrak{g}$-module with highest weight $\lambda$. 
\end{definition}

\textit{Apology.} We
apologize for the notation $\LieG_g$ used below. On the one hand, this is natural,
since we have a sequence of groups $G$ in each genus $g=7,8,9,10$. On the
other hand, it is terrible because one of these groups is the
exceptional group $G_2$. We trust that the reader can muddle through. 

\begin{definition}
  We define Mukai models for $g=7,8,9,10$ as the following GIT quotients.
\begin{align*}
  \mukai^{1}_g &:= \Gr(g,V_g) \git \LieG_g \\
  \mukai^{2}_g &:= \Gr(g+1,V_g) \git \LieG_g
\end{align*}
Here $V_g$ and $\LieG_g$ are given by the following table.
\begin{center}
\begin{tabular}{llll}
  $g$  & $\LieG_g$ & $V_g$ & Description of $V_g$ \\
  7 & $\Spin(10)$ & $V(0,0,0,1,0)$ & half spin representation \\
  8 & $\SL(6)$ & $V(0,1,0,0,0)$ & $\mywedge^2 \mathbb{C}^6$ \\
  9 & $\Sp(6)$ & $V(0,0,1)$ & $\ker(\mywedge^3 \mathbb{C}^6 \rightarrow \mathbb{C}^6 )$\\
  10 & $G_2$ & $V(0,1)$ & adjoint representation
\end{tabular}
\end{center}
\end{definition}

A key role is played by the following homogeneous varieties $X_g$,
which are the orbits of a highest weight vector in the representations $V_g$. 

\begin{center}
\begin{tabular}{lll}
  $g$ & $X_g$ & Description of $X_g$ \\
  7 & $\OG$ & The orthogonal Grassmannian, or isotropic Grassmannian\\
                    && Also known as the 10-dimensional spinor variety \\
  8 & $\Gr(2,6)$ & \\
  9 & $\SpGr$ & The symplectic Grassmannian, or Lagrangian Grassmannian\\
  10 & & The adjoint variety of $G_2$
\end{tabular}
\end{center}

We obtain curves from points in the parameter spaces of
Mukai's models as follows. Let $[P] \in \Gr(g,V_g)$ be general. Then $P \cap
X_g$ is a smooth canonically embedded curve of genus $g$. (A similar
statement is true for surfaces, replacing $\Gr(g,V_g)$ by $\Gr(g+1,V_g)$.)

\section{Invariant polynomials associated to Mukai's
  models} \label{sec: invariant polynomials}

We have constructed one or more invariant polynomials for each of
Mukai's models. Here, for each Mukai model $\mukai^{k}_g$, we describe
an invariant polynomial $F_{g,k}$ that establishes GIT semistability
for our selected examples. We repeatedly
exploit the fact that the representations $\mywedge^{g} V_g^{*}$ and
$\mywedge^{g+1} V_g^{*}$ are reducible. This permits us to efficiently evaluate
polynomials that are suitably adapted to these decompositions.

\subsection{How we selected these polynomials}
Suppose that $\mywedge^{g} V_g^{*}$ decomposes into irreducibles
$\oplus V(\lambda_i)^{m_i}$. We typically select the irreducible $V(\lambda_i)$ with smallest dimension. Then we seek the lowest degree $d$ such that $\Sym^d V(\lambda_i)$ contains an invariant. If $d=4,6$ we may seek trivial $G$-modules among the products of submodules of $\Sym^2 V(\lambda_i)$. When doing so, we typically select submodules of small dimension, but not necessarily the smallest ones.

\subsection{Genus 7, dimension 1}
Mukai's model is
\[
\mukai^{1}_7 =  \Gr(7,V_7) \git \LieG_7 = \Gr(7,V(0,0,0,1,0)) \git \Spin(10).
\]
Here $V_7$ is the 16-dimensional half-spin representation of
$\Spin(10)$. The corresponding root system D5 has rank 5. The highest
weight of $V_7$ is the fundamental dominant weight
$\omega_4$. Thus, using the notation $V(c_1,c_2,c_3,c_4,c_5)$ from
Definition \ref{def: Vc notation}, we write $V_7 = V(0,0,0,1,0)$.  

A character calculation shows that the lowest degree invariants for $\mywedge^7 V(0,0,0,1,0)^{*} $ are in
degree 4.

We have the following decomposition. Below each representation, we
record its dimension.
\[
\begin{array}{ccccc}
  \mywedge^7 V(0,0,0,1,0)^{*} & = & V(1,0,1,1,0) & \oplus & V(3,0,0,0,1)
  \\
  11440 & = & 8800 &+ & 2640 
\end{array}
\]

Furthermore, $\Sym^2 V(3,0,0,0,1)$ contains $V(3,0,0,0,0)$ with
multiplicity 1, and $\Sym^2 V(3,0,0,0,0)$ contains the trivial representation with
multiplicity 1. This defines $F_{7,1} \in
\Sym^4 \mywedge^7 V_7^{*}$ up to scaling. (Note: a different invariant
polynomial $F_{5\omega_1}$ was used in \cite{Swinarski2025}. The method for evaluating
$F_{7,1}$ here is far more efficient than the method used to
evaluate $F_{5 \omega_1}$ there.)

\subsection{Genus 7, dimension 2}
Mukai's model is
\[
\mukai^{2}_7 =  \Gr(8,V_7) \git \LieG_7 = \Gr(8,V(0,0,0,1,0)) \git \Spin(10).
\]
See the previous subsection for some additional comments on this
notation.

A character calculation shows that the lowest degree invariants are in
degree 2.

We have the following decomposition. Below each representation, we
record its dimension.
\[
\begin{array}{ccccccc}
  \mywedge^8 V(0,0,0,1,0)^{*} & = & V(0,0,2,0,0) & \oplus & V(2,0,0,1,1) & \oplus &V(4,0,0,0,0)
  \\
  12870 & = & 4125&+ & 8085 & + & 660
\end{array}
\]

Here, $\Sym^2 V(4,0,0,0,0)$ contains the trivial representation with
multiplicity 1. This defines $F_{7,2} \in
\Sym^2 \mywedge^8 V_7^{*}$ up to scaling. 

\subsection{Genus 8, dimension 1}
Mukai's model is
\[
\mukai^{1}_8=  \Gr(8,V_8) \git \LieG_8 = \Gr(8,V(0,1,0,0,0)) \git \SL(6).
\]
Here $V_8$ is the 15-dimensional representation $\mywedge^2
\mathbb{C}^6$. The corresponding root system A5 has rank 5. The highest
weight of $V_8$ is the fundamental dominant weight
$\omega_2$. Thus, using the notation $V(c_1,c_2,c_3,c_4,c_5)$ from
Definition \ref{def: Vc notation}, we write $V_8 = V(0,1,0,0,0)$.  In
type A, it is also common to describe irreducible representations
using partitions. The partition corresponding to $V_8$ is $1,1,0,0,0,0$.

A character calculation shows that the lowest degree invariants for $\mywedge^8 V(0,1,0,0,0)^{*}$ are in
degree 3. However, our first attempts at evaluating degree 3 invariants
suggest that some or all of them are in the ideal generated by the Pl\"{u}cker
relations; for instance, they could be linear syzygies among some of these
quadratic relations. Therefore, we looked for degree 6 invariant
polynomials instead. 

We have the following decomposition. Below each representation, we
record its dimension, and the partitions corresponding to the
irreducible representations.
\[
\begin{array}{ccccccc}
  \mywedge^8 V(0,1,0,0,0)^{*} & = & V(2,1,0,1,0) & \oplus & V(0,2,0,0,2)& \oplus & V(1,0,0,2,1)
  \\
  6435 & = & 2430 &+ & 1800 &+ & 2205 \\
           &     & 4,2,1,1,0,0 & &4,2,2,2,0,0 & &4,3,3,3,1,0
\end{array}
\]

We study the smallest of these submodules. $\Sym^2  V(0,2,0,0,2)$
contains a summand $V(0,2,0,0,0) $ with multiplicity 1. $V(0,2,0,0,0)$
has dimension 105, and corresponds to the partition $2,2,0,0,0,0$. We
find that $\Sym^3 V(0,2,0,0,0)$ contains the trivial representation with multiplicity 1. This defines $F_{8,1} \in
\Sym^6 \mywedge^8 V_8^{*}$ up to scaling.

\subsection{Genus 8, dimension 2} \label{subsec: genus 8 dim 2}
Mukai's model is
\[
\mukai^{2}_8 =  \Gr(9,V_8) \git \LieG_8 = \Gr(9,V(0,1,0,0,0)) \git \SL(6).
\]
See the previous subsection for some additional comments on this
notation.

A character calculation shows that the lowest degree invariants are in
degree 2. Here is a description.

We have the following decomposition. Below each representation, we
record its dimension and the corresponding partition.
\[
\begin{array}{ccccccc}
  \mywedge^9 V(0,1,0,0,0)^{*} & = & V(1,1,0,1,1) & \oplus & V(3,0,1,0,0) & \oplus & V(0,0,0,3,0)
  \\
  5005 & = & 3675 &+ & 840 &+ & 490 \\
           &    & 4,3,3,2,1,0 && 4,1,1,0,0,0 & & 3,3,3,3,0,0
\end{array}
\]

The first summand $V(1,1,0,1,1)$ is self-dual. Recall that
$\dim (V(\lambda) \otimes V(\lambda^{*}))^G=1$. Moreover, in type A, we have a
combinatorial formula for this invariant polynomial in terms of the
Gelfand-Tsetlin bases of $V(\lambda)$ and $V(\lambda^{*})$; see Theorem \ref{theorem: invariant polynomial formula} in the Appendix. Applying this to $\lambda =
(1,1,0,1,1)$ yields the invariant polynomial $F_{8,2} \in \Sym^2 \mywedge^9 V_8^{*}$ up to scaling.

\subsection{Genus 9, dimension 1}
Mukai's model is
\[
\mukai^{1}_9 =  \Gr(9,V_9) \git \LieG_9 = \Gr(6,V(0,0,1)) \git \Sp(6).
\]
Here $V_9$ is the 14-dimensional irreducible representation of 
$\Sp(6)$. See \cite{FH}*{Section 17.2} for a construction of $V_9$ as
the kernel of an explicit contraction map. The highest
weight of $V_9$ is the fundamental dominant weight
$\omega_3$. Thus, using the notation $V(c_1,c_2,c_3)$ from
Definition \ref{def: Vc notation}, we write $V_9 = V(0,0,1)$.  

A character calculation shows that the lowest degree invariants for $\mywedge^9 V(0,0,1)^{*}$ are in
degree 4.

We have the following decomposition. Below each representation, we
record its dimension.
\[
\begin{array}{ccccccccc}
  \mywedge^9 V(0,0,1)^{*} & = & V(0,0,1) & \oplus & V(1,2,0) & \oplus & V(5,0,0) & \oplus & V(2,1,1)
  \\
  2002 & = & 14 &+ & 350 &+ & 252 &+ & 1386 
\end{array}
\]


$\Sym^2 V(5,0,0)$ contains $V(2,0,0)$ with
multiplicity 1, and $\Sym^2 V(2,0,0)$ contains the trivial
representation with multiplicity 1. This defines $F_{9,1} \in
\Sym^4 \mywedge^9 V_9^{*}$ up to scaling.

\subsection{Genus 9, dimension 2}
Mukai's model is
\[
\mukai^{2}_9 =  \Gr(10,V_9) \git \LieG_9 = \Gr(10,V(0,0,1)) \git \Sp(6).
\]
See the previous subsection for some additional comments on this
notation.

Here, $\mywedge^{10} V(0,0,1)^{*}$ contains the trivial representation with multiplicity 1. This defines $F_{9,2} \in
\Sym^1 \mywedge^{10} V_9^{*}$ up to scaling.

\subsection{Genus 10, dimension 1}
Mukai's model is
\[
\mukai^{1}_{10} =  \Gr(10,V_{10}) \git \LieG_{10} = \Gr(10,V(0,1)) \git G_2.
\]
(Again, we apologize for the notation: $\LieG_{10}$, the group acting on Mukai's models for genus 10, is the exceptional group $G_2$.)

Here $V_{10}$ is the adjoint representation of $G_2$. The highest
weight of $V_{10}$ is the fundamental dominant weight
$\omega_2$. Thus, using the notation $V(c_1,c_2)$ from
Definition \ref{def: Vc notation}, we write $V_{10} = V(0,1)$.  

A character calculation shows that the lowest degree invariants are in
degree 2.

The representation $\mywedge^{10} V(0,1)^{*} $ decomposes as a sum of
seven irreducibles, each with multiplicity 1. One of these summands is
(a copy of) $V(0,1)$. Furthermore
$\Sym^2 V(0,1)$ contains the trivial representation with multiplicity 1. This defines $F_{10,1} \in
\Sym^2 \mywedge^{10} V_{10}^{*}$ up to scaling.

\subsection{Genus 10, dimension 2}
Mukai's model is
\[
\mukai^{2}_{10} =  \Gr(11,V_{10}) \git \LieG_{10} = \Gr(11,V(0,1)) \git G_2.
\]
See the previous subsection for some additional comments on this
notation.

Here, $\mywedge^{11} V(0,1)^{*}$ contains the trivial representation with multiplicity 1. This defines $F_{10,2} \in
\Sym^1 \mywedge^{11} V_{10}^{*}$ up to scaling.

\section{Locating specific singular curves and surfaces in the parameter spaces of Mukai's models}
\label{sec: finding linear spaces}
Our approach to proving the Main Theorem requires that we find points in the parameter spaces of Mukai's models corresponding to the singular curves and surfaces listed there.

In Section \ref{subsec:VgXg}, we discuss the coordinates we use for the representations $V_g$ and the equations of the homogeneous varieties $X_g$ in these coordinates. In Section \ref{subsec:points}, we describe points in the Grassmannians corresponding to the singular curves and surfaces appearing in the Main Theorem, and give a link to the author's website where additional \texttt{Macaulay2} code can be found verifying these claims.  Finally, in Section \ref{subsec:ribbon}, we demonstrate in an example how we obtained these points.

\subsection{Coordinates for $V_g$ and equations of $X_g$}\label{subsec:VgXg}

We discuss the coordinates we use for the representations $V_g$ and
briefly comment on how we obtain  equations of the homogeneous
varieties $X_g$ in these coordinates. Full lists of 
these equations are available at \cite{Code}.  

\subsubsection{Genus 7}
Mukai's coordinates for $V_7$ are $x_0$, $x_{12}$, $x_{13}$,
$x_{14}$, $x_{15}$, $x_{23}$, $x_{24}$, $x_{25}$, $x_{34}$, $x_{35}$, $x_{45}$,
$x_{1234}$, $x_{1235}$, $x_{1245}$, $x_{1345}$, $x_{2345}$. The homogeneous variety $X_7$ is the orthogonal Grassmannian
$\OG$, and Mukai gives equations for it in \cite{Mukai1995}*{(0.1)}.

\subsubsection{Genus 8} We use the Pl\"{u}cker coordinates
for $\mywedge^2 \mathbb{C}^{6}$, and the equations defining $X_8 =
\Gr(2,6)$ are well-known. 

\subsubsection{Genus 9} \label{subsubsec: SpGr36}
First, we describe a basis of the vector space $V_9$.
The representation can be constructed as the kernel of the
  contraction map $\varphi_3: \mywedge^3 V \rightarrow V$
given by 
\[
\varphi_3(v_1 \wedge v_2 \wedge v_3) = \sum_{i < j} Q(v_i,v_j)
  (-1)^{i+j-1} v_k,
\]
where \(V\) is the standard representation and \(Q\) is the symplectic
  form. See \cite{FH}*{Lecture 17}.

Write \(e_{i_{1}i_{2}i_{3}}\) for the wedge product \(e_{i_1} \wedge
e_{i_2} \wedge e_{i_3}\). We chose the following basis for \(V_9\).
\begin{gather*}
\{ e_{123}, e_{234},e_{134}-e_{235},-e_{124}-e_{236},
-e_{135},e_{125}-e_{136}, e_{126},\\ e_{156}, e_{146}-e_{256},
-e_{145}-e_{356}, -e_{246}, e_{245}-e_{346},e_{345},e_{456}\}.
\end{gather*}

Let $z_0,\ldots,z_{13}$ be the dual basis. Mukai describes equations for the symplectic Grassmannian $\SpGr$ in
 \cite{Mukai2010}*{Section 2}. Write
 \begin{align*}
   y &= z_{0} \\
   X &= \left[ \begin{array}{rrr} z_1 & z_2 & z_3 \\ z_2 & z_4 & z_5 \\ z_3 & z_5 & z_6    \end{array} \right] \\
   Y &= \left[ \begin{array}{rrr} z_7 & z_8 & z_9 \\ z_8 & z_{10} & z_{11} \\ z_9 & z_{11} & z_{12}    \end{array} \right] \\
   x &= z_{13} \\
 \end{align*}

 Let $X' = \operatorname{Cof}(X)$, and $Y' = \operatorname{Cof}(Y)$ be the matrices of cofactors of $X$ and $Y$ respectively. Then the equations of $\SpGr$ are given by
 \[
  X' = yY, \qquad Y' = xX, \qquad \text{ and } XY = xy I_3.
 \]

 \subsubsection{Genus 10} We use the basis
 $H_1,H_2,X_1,X_2,X_3,X_4,X_5,X_6,Y_1,Y_2,Y_3,Y_4,Y_5,Y_6$ of the Lie
 algebra $\mathfrak{g}_2$ with brackets defined by Table 22.1 in
 \cite{FH}.  We use the algorithm given by Lichtenstein in
 \cite{Lichtenstein} for computing the minimal closed orbit variety in
 an irreducible representation to obtain quadrics defining the adjoint
 variety of $G_2$; see \cite{Code}.

\subsection{Points in Mukai's parameter space corresponding to the singular curves and surfaces in the Main Theorem}\label{subsec:points}
\begin{theorem} \label{thm: points in parameter space}
  Intersecting the following linear spaces with the homogeneous varieties $X_g$ yield the singular curves and surfaces listed below.

Genus 7:
  \begin{enumerate}
  \item[(a)] The genus 7 tangent developable:
    \[ V(x_{13}, x_{23}+\frac{5}{3} x_{15}, x_{24}, x_{34}-5 x_{25}, x_{45}-\frac{4}{3} x_{35}, x_{1245}-\frac{9}{8} x_{1235}, x_{1345}+\frac{1}{2} x_{12}, x_{2345}+\frac{2}{15} x_{14})
     \]
\item[(b)] The genus 7 cuspidal cubic with 7-gonal symmetry:
    \[ V(x_{13}, x_{23}+\frac{5}{3} x_{15}, x_{24}, x_{34}-5 x_{25},
      x_{45}-\frac{4}{3} x_{35}, x_{1245}-\frac{9}{8} x_{1235},
      x_{1345}+\frac{1}{2} x_{12}, x_{2345}+\frac{2}{15}
      x_{14},x_{1234}-\frac{1}{30} x_0)
     \]
   \item[(c)] The genus 7 balanced K3 carpet:
  \[
V(x_0, x_{12} - x_{1345}, x_{13} - x_{2345}, x_{24} - 2 x_{15}, x_{34} - \frac{1}{2}x_{25}, x_{45}, x_{1234}, x_{1235})
   \]
 \item[(d)] The genus 7 balanced ribbon:
   \[
     V(x_0, x_{12}-x_{1345}, x_{13}-x_{2345}, x_{24}-2 x_{15},
     x_{34} - \frac{1}{2}x_{25}, x_{45}, x_{1234}, x_{1235}, x_{23}-x_{14} )
   \]
 \item[(e)] The genus 7 graph curve for $\Gamma_7$:
   \[ V(x_{2345},x_{1245},x_{1234},x_{35},x_{23}-x_{25},x_{15}+x_{25}-x_{35}-x_{45},x_{14},x_{12}-x_{13},x_0+x_{25}-x_{35}-x_{45})
    \]
\end{enumerate}

Genus 8: 
  \begin{enumerate}
  \item[(a)] The genus 8 tangent developable:
  \[
V(x_{23} - \frac{5}{3} x_{14}, x_{24} - 5 x_{15}, x_{25} - 15 x_{16}, x_{34} - 20 x_{16}, x_{35} - 5 x_{26}, x_{45} - \frac{5}{3}x_{36})
   \]    
\item[(b)] The genus 8 cuspidal cubic with 8-gonal symmetry:
  \[
V(x_{23} - \frac{5}{3} x_{14}, x_{24} - 5 x_{15}, x_{25} - 15 x_{16}, x_{34} - 20 x_{16}, x_{35} - 5 x_{26}, x_{45} - \frac{5}{3}x_{36}, x_{12}-x_{56})
   \]
 \item[(c)] A genus 8 reducible surface with four components, each
   generically nonreduced:
   \[
 V(x_{12}-x_{34}, x_{12}-x_{13}, x_{14}-x_{45}, x_{16}-x_{23}, x_{24}-x_{35}, x_{26}-x_{36})
\]   
\item[(d)] A genus 8 reducible curve with five components, each
  generically nonreduced:
  \[
 V(x_{12}-x_{34}, x_{12}-x_{13}, x_{14}-x_{45}, x_{16}-x_{23}, x_{24}-x_{35}, x_{26}-x_{36}, x_{15}-x_{46} ) 
\]
\item[(e)] The genus 8 graph curve for $\Gamma_8$:
  \[
V(x_{12}-32 x_{46},  x_{13}+(64\sqrt{2}-96) x_{56}, x_{14}+\frac{1}{32}
x_{23},   x_{15}-x_{24}, x_{16}+\frac{1}{32}(2\sqrt{2}+3) x_{35}, x_{25}-x_{34},
x_{26}-x_{45})
  \]

\end{enumerate}

Genus 9:
  \begin{enumerate}
  \item[(a)] The genus 9 tangent developable:
\[
V( z_3-\frac{135}{2}z_4,z_9-\frac{1}{432} z_{10},z_5-45z_{12},z_6 + 36z_{11})      
\]   
\item[(b)] The genus 9 cuspidal cubic with 9-gonal symmetry:
\[
V( z_3-\frac{135}{2}z_4,z_9-\frac{1}{432} z_{10},z_5-45z_{12},z_6 +
36z_{11}, z_0 + \frac{1}{5} z_{13})      
\]     
\item[(c)] The genus 9 balanced K3 carpet:
  \[
V( z_0, z_{13}, z_3+\frac{1}{2}z_4, z_9+\frac{1}{2}z_{10})
\]
\item[(d)] The genus 9 balanced ribbon:
  \[
V(   z_0, z_{13}, z_3+\frac{1}{2}z_4, z_9+\frac{1}{2}z_{10}, z_1-4z_7)
\]  
\item[(e)] The genus 9 graph curve for $\Gamma_9$:
  \[
    V(   z_{0} -(12\sqrt{3}-18) z_{13},  z_4 -
    (\frac{4}{9}\sqrt{3}+\frac{2}{3})z_3, z_6 - (12 \sqrt{3}-21)z_1,
    z_{10} - (-6\sqrt{3}+9)z_9, z_{12}+\frac{1}{3}z_7).  
\]
 \end{enumerate}

Genus 10:
  \begin{enumerate}
  \item[(a)] The genus 10 tangent developable:
  \[
V(H_1-\frac{4}{9}H_2, X_3-\frac{4}{27}Y_6, Y_3-\frac{5}{4}X_6)
  \]    
\item[(b)] The genus 10 cuspidal cubic with 10-gonal symmetry:
  \[
V(H_1-\frac{4}{9}H_2, X_3-\frac{4}{27}Y_6, Y_3-\frac{5}{4}X_6,X_5+\frac{8}{75}Y_5)
  \]
\item[(c)] A genus 10 reducible surface with two components, each generically nonreduced:
  \[
 V(X_1-Y_1,Y_2-Y_3,X_4-X_5)
\]   
\item[(d)] A genus 10 reducible curve with four components, each generically nonreduced:
  \[
 V(X_1-Y_1,Y_2-Y_3,X_4-X_5,H_1-Y_5)
\]
\end{enumerate}
\end{theorem}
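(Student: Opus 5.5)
This statement is a verification claim, and I plan to prove it computationally, case by case, in \texttt{Macaulay2}.

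For each of the eighteen linear spaces $L$ listed, I form the ideal $I_L = I(X_g) + (\ell_1,\dots,\ell_r)$, where the $\ell_i$ are the listed linear forms. The equations of $X_g$ are those described in Section~\ref{subsec:VgXg}: Mukai's quadrics for $\OG$, the Pl\"ucker quadrics for $\Gr(2,6)$, the cofactor relations $X'=yY$, $Y'=xX$, $XY=xyI_3$ for $\SpGr$, and Lichtenstein's quadrics for the $G_2$ adjoint variety. Where coefficients involve $\sqrt2$ or $\sqrt3$, I work over $\mathbb{Q}(\sqrt2)$ or $\mathbb{Q}(\sqrt3)$.

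The first check is that $L$ has the right dimension. The number of linear forms should be $\dim V_g - g$ or $\dim V_g-(g+1)$, for curves and surfaces respectively; I verify the forms are linearly independent. I then compute $\dim$ and degree of $I_L$ and confirm it is a curve or surface of degree $2g-2$ with the correct Hilbert polynomial. For a canonical curve this is $(2g-2)t+1-g$. For a K3-type surface it is $(g-1)t^2+2$.

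Next I identify the specific object in each case.
\begin{itemize}
\item For the tangent developables, I parametrize the rational normal curve $t\mapsto (1,t,\dots,t^{g-1})$ (suitably scaled). I check that its tangent surface satisfies $I_L$ and that $I_L$ is saturated, so the ideals agree.
\item For the cuspidal curves, I check that the extra hyperplane section of the developable is the expected $\mu_g$-symmetric cuspidal curve, using the torus action in these coordinates.
\item For the carpets and ribbons, I verify that $I_L$ has radical the ideal of a rational normal scroll, with multiplicity $2$ in the primary decomposition. I then check the carpet and ribbon conditions, namely that the square of the radical lies in $I_L$, and the balanced type.
\item For the graph curves, I compute the primary decomposition and check that there are $2g-2$ lines whose incidence graph is $\Gamma_g$ (respectively the Heawood graph).
\item For 8c, 8d, 10c and 10d, the primary decomposition should exhibit the stated number of components, each generically nonreduced.
\end{itemize}

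I expect the main obstacle to be the primary decompositions and saturation checks in the larger cases ($\OG$ in $16$ variables, the $G_2$ adjoint variety). If direct decomposition is too slow, I will first compute radicals, or work with explicit candidate components and verify containment and degree additivity. The explicit candidates are lines for graph curves and scrolls for carpets. Degree additivity means the degrees of the components, counted with multiplicity, sum to $2g-2$ (or $2g-2$ for the surface degree), which forces equality without a full decomposition. Any remaining identification would be certified by explicit parametrizations substituted into the generators; the code for all of this is posted at \cite{Code}.
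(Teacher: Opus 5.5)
\textbf{Gap: the carpets and ribbons (7c, 7d, 9c, 9d) are not actually identified.} The checks you list are: the radical is the ideal of a scroll (for a ribbon, of a rational normal curve); there is a single primary component of multiplicity $2$; $(\sqrt{I_L})^2\subseteq I_L$; and the Hilbert polynomial is the expected one.

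For the ribbons, these conditions hold for \emph{every} canonical rational ribbon of genus $g$. Such ribbons are classified by nonzero extension classes in $H^1(\mathcal{O}_{\mathbb{P}^1}(1-g))$, i.e.\ binary forms of degree $g-3$ up to scaling and $\mathrm{PGL}_2$, and these have moduli for $g=7,9$. The balanced ribbon is the one whose class is the middle monomial. So ``check the balanced type'' is the entire content of 7d and 9d, and you give no method for it.

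For the carpets you need two ingredients you do not state:
\begin{enumerate}
\item[(i)] The double structure $X$ on $S$ is a K3 carpet, i.e.\ $\mathcal{I}_{S,X}\cong\omega_S$. This does follow from the absence of embedded components together with the Hilbert polynomial $(g-1)t^2+2$, by Riemann--Roch on $S(a,b)$, but it has to be argued.
\item[(ii)] The uniqueness of the K3 carpet supported on a given smooth scroll, due to Gallego--Purnaprajna.
\end{enumerate}
Your degree-additivity fallback cannot replace these, since every double structure on $S$ has the same cycle $2[S]$.

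\textbf{Where your route works, and how it compares.} Elsewhere your intrinsic-invariant approach is a legitimate alternative to the paper's.
\begin{enumerate}
\item[(1)] The linear spaces are homogeneous for a one-parameter subgroup of $G_g$. So a rescaled monomial parametrization of the tangent developable can be checked to lie in $P\cap X_g$, and containment, equal Hilbert polynomials and saturation then force equality.
\item[(2)] The cuspidal cases follow as hyperplane sections of the tangent developables.
\item[(3)] For the graph curves, suppose the decomposition shows a reduced nodal union of $2g-2$ lines with dual graph $\Gamma_g$. Then you are done: a one-dimensional linear section of the arithmetically Gorenstein $X_g$ is canonically embedded by adjunction, and a graph curve has no moduli.
\end{enumerate}
The paper cites \cite{Swinarski2025} for 7b, 7d, 7e and uses primary decompositions for 8c, 8d, 10c, 10d, as you do. For the remaining cases it never recognizes the object intrinsically. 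It writes down an explicit linear isomorphism carrying the ideal of $P\cap X_g$ onto a published ideal, found by matching torus weights and solving for the coefficients, as in the genus~$9$ computation with the variety $\Gamma$. That is also the natural repair for your carpet and ribbon cases: compare directly with the ideals of \cite{ES2019} and \cite{DFS2014}.

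\textbf{A smaller slip.} The tangent developables here are surfaces in $P\cong\mathbb{P}^g$, swept out by the tangent lines to $t\mapsto(1,t,\dots,t^g)$. Your curve $(1,t,\dots,t^{g-1})$ lies in $\mathbb{P}^{g-1}$, and its tangent surface has degree $2g-4$, not $2g-2$.
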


\begin{proof}
For the genus 7 curves above (examples 7b, 7d, and 7e), these points in the Grassmannian $\Gr(7,16)$ are given in \cite{Swinarski2025}, Propositions 3.1, 4.1, and 5.1.

For the genus 8 and 10 nonreduced curves and surfaces (examples 8c, 8d,
10c, and 10d), we use \texttt{Macaulay2} to compute primary
decompositions of the ideal of the intersection $P \cap X_g$ and its
radical to establish the description given.

We sketch the approach used for the remaining examples. For each
curve $C \subset \mathbb{P}^g$, we obtain equations of its ideal $I(C) \subseteq k[x_0,\ldots,x_g]$ from the
literature; see the references below. We choose an
isomorphism $P \cong \mathbb{P}^g$, and provide an explicit
isomorphism $\varphi: \mathbb{P}^g \rightarrow \mathbb{P}^g$ inducing
a map that carries $I(P \cap X_g)$ to $I(C)$. (An analogous procedure is used for surfaces,
replacing $\mathbb{P}^g$ by $\mathbb{P}^{g+1}$.)

See the code on the author's website \cite{Code} for precise formulas for these isomorphisms.

Here are some references for equations (or
algorithms for producing equations) of these singular curves and surfaces
\begin{itemize}
\item Equations for the tangent developable can be obtained by
  starting with a parametrization of this surface (see for instance
  \cite{EL}), then eliminating parameters. The equations can also be
  found explicitly in the literature. See \cite{E}*{Section I.A} for
  quadrics defining the ideal, and \cite{BS1990}*{Prop. 2.9} (who in
  turn cite \cite{SchreyerThesis}) for a Gr\"{o}bner basis of the
  ideal with respect to the grevlex term order. This yields 
  equations for Examples 7a, 8a, 9a, and 10a.
\item $g$-cuspidal curves are obtained as hyperplane sections of the
  tangent developable. Choosing the hyperplane $y_0=y_g$ (in the
  notation of \cite{BS1990}*{Prop. 2.9}) gives a $g$-cuspidal curve
  with $g$-gonal symmetry. This yields equations for Examples 7b, 8b,
  9b, and 10b.
\item Equations of K3 carpets $X(a,b)$ are described in
  \cite{ES2019}*{Theorem 3.5}, and also implemented in the
  \texttt{Macaulay2} package \texttt{K3Carpets}. This yields equations
  for Examples 7c and 9c.
\item Ribbons are hyperplane sections of K3 carpets. Equations of the balanced ribbons are given in \cite{DFS2014}*{Thm.~4.4}. This yields equations
  for Examples 7d and 9d.  
\item An algorithm for producing equations of trivalent graph curves
  is described in  \cite{BE1991}*{Prop.~3.1}. We use this to obtain a
  first set of equations for Examples 7e, 8e, and 9e. For examples 8e
  and 9e, we found it useful to subsequently change coordinates to
  diagonalize the action of one of the automorphisms of largest order.
\end{itemize}

\end{proof}

The statement and proof of Theorem \ref{thm: points in parameter
  space} above do not explain or even hint at how the equations defining these
linear spaces were obtained. We used three strategies.

\begin{itemize}
\item In genus 7, Mukai's construction can be implemented as an
  algorithm. See \cite{Swinarski2025} Sections 3 and 4 for details.
\item Many of the examples presented here have rich automorphism
  groups, and we used them to find the linear spaces presented
  here. To illustrate this idea, we present the calculations in detail for
two examples (the genus 9 balanced K3 carpet and balanced ribbon) in Section \ref{subsec:ribbon} below. 
\item We found the generically nonreduced curves and surfaces of genus 8
  and 10 by a computer search. We began by studying linear spaces defined by binomial hyperplanes and found GIT semistable surfaces of genus 8 and 10 with every component generically nonreduced. Then we searched hyperplane sections of these surfaces to find the generically nonreduced curves.
\end{itemize}

\subsection{Examples: Linear spaces yielding the genus 9 balanced K3 carpet and balanced ribbon}\label{subsec:ribbon}

We illustrate the ideas used to find many the linear spaces presented in Theorem \ref{thm: points in parameter
  space} for two examples: the genus 9
balanced K3 carpet and balanced ribbon (examples 9c and 9d).

\begin{proposition}
Let $X(4,4)$ be the genus 9 balanced K3 carpet.  Then  $X(4,4) =
\SpGr \cap V(z_0,  z_{13}, z_3+\frac{1}{2}z_4, z_9+\frac{1}{2}z_{10})$.

The genus 9 balanced ribbon is $\SpGr  \cap V(   z_0, z_{13}, z_3+\frac{1}{2}z_4, z_9+\frac{1}{2}z_{10}, z_1-4z_7)$.  
\end{proposition}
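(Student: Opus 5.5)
The plan is to identify the linear section explicitly with the ideal of $X(4,4)$, using the torus symmetry of the carpet to guide the coordinate choice. Recall from \cite{ES2019}*{Theorem 3.5} that $X(a,b)$ is the double structure on the rational normal scroll $S(a,b)\subset\mathbb{P}^{a+b+1}$ with ideal generated by the $2\times 2$ minors of a $2\times(a+b)$ scroll matrix together with certain quadrics in the scroll variables. For $a=b=4$ the ambient space is $\mathbb{P}^{9}$, which matches $\dim P = 10$ for $[P]\in\Gr(10,V_9)$. The carpet $X(4,4)$ has a large automorphism group: it contains a two-dimensional torus scaling the two rulings' coordinates, together with an involution swapping the two halves.

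The first step is to locate this symmetry inside $\Sp(6)$. The maximal torus of $\Sp(6)$ acts diagonally on the basis of $V_9$ from Section \ref{subsubsec: SpGr36}, and the weights of $z_0,\dots,z_{13}$ can be read off from the indices of the $e_{ijk}$. I will pick a rank-two subtorus $T'$ whose weights on the quotient coordinates $z_1,z_2,z_4,z_5,z_6,z_7,z_8,z_9+\tfrac12z_{10}$-type combinations form two arithmetic progressions of length $5$. This reproduces the grading of the scroll $S(4,4)$ with coordinates $u_0,\dots,u_4,v_0,\dots,v_4$. The hyperplanes $z_0=z_{13}=0$ kill the two extreme weight vectors $e_{123}$ and $e_{456}$, and the two remaining hyperplanes cut the weight spaces that are two-dimensional for $T'$ down to one dimension. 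The normalizations $\tfrac12$ are then fixed by requiring that the restricted equations be $T'$-equivariantly those of the carpet.

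The second step is the computation itself. I restrict Mukai's equations $X'=yY$, $Y'=xX$, $XY=xyI_3$ with $x=y=0$. This gives $\operatorname{Cof}(X)=0$, $\operatorname{Cof}(Y)=0$, and $XY=0$. The conditions on $X$ and $Y$ say that both symmetric matrices have rank $\le 1$, and $XY=0$ couples them. After substituting $z_3=-\tfrac12 z_4$ and $z_9=-\tfrac12 z_{10}$, I will write down an explicit linear isomorphism $\varphi:\mathbb{P}^9\to\mathbb{P}^9$ and check that it carries the ideal generated by these $2\times2$ minors and the entries of $XY$ onto the ideal of $X(4,4)$ from \texttt{K3Carpets}. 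This equality of ideals, not just of radicals, is verified in \texttt{Macaulay2}, for example by comparing Gröbner bases or checking mutual containment together with equal Hilbert polynomials. It is also worth checking that the linear section is saturated, so that scheme-theoretic equality holds.

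For the ribbon, I use the fact that it is the hyperplane section of the carpet by a hyperplane transverse to the double structure; this yields the balanced ribbon of \cite{DFS2014}*{Thm.~4.4}. I will check that $z_1-4z_7$ pulls back under $\varphi$ to the hyperplane used there, and then repeat the ideal comparison in one dimension lower. The main obstacle is finding $\varphi$ and the constants $\tfrac12$ and $4$. My first attempt will be to work weight-space by weight-space, since $T'$-equivariance forces $\varphi$ to be diagonal up to the two-dimensional weight spaces. That reduces the search to finitely many scalars, which I can solve for from the requirement that the quadric generators match.
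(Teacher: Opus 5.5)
Your plan is essentially the paper's proof. It uses a torus-weight ansatz that makes the coordinate change $\psi$ diagonal, finds the unknown scalars by matching the $21$ restricted quadrics (the paper's variety $\Gamma$), checks $\psi(I(\SpGr))=I(X(4,4))$ in \texttt{Macaulay2}, and obtains the ribbon from the section $a_4=a_5$, which pulls back to $z_1-4z_7$. Your rank-two torus (cocharacters $(m_1,m_2,m_3)$ with $m_1+m_3=2m_2$) mildly refines the paper's one-parameter subgroup along $(3,2,1)$, because it also separates $z_1$ from $z_7$, which both have weight $0$ for the paper's $\mathbb{G}_m$.

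Two small corrections. First, the carpet's ideal is generated by the $2\times 2$ minors within each $2\times 4$ block plus nine mixed quadrics such as $a_4a_5-2a_3a_6+a_2a_7$, not by all minors of the $2\times 8$ scroll matrix (those cut out the reduced scroll). Second, the constants $\tfrac12$ are not forced: only the product of the ratios $c_3/c_4$ and $c_9/c_{10}$ is determined (it equals $\tfrac14$), because the torus direction $(0,1,0)$ of $\Sp(6)$ rescales them inversely. So the statement's hyperplanes are a choice, and you just need one compatible solution.
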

\begin{proof}
We approach this calculation in five steps.
\begin{enumerate}
\item[Step 1.] We obtain generators of the ideals of $\SpGr \subset \mathbb{P}^{13}$ and $X(4,4) \subset \mathbb{P}^{9}.$
\item[Step 2.] We compare the representations of the automorphism groups of the
  balanced K3 carpet and ribbon and $\SpGr$ and make
  an ansatz regarding the form of the desired ring homomorphism. 
\item[Step 3.] We compare the quadrics defining $X(4,4)$ and $\SpGr$ and
  define an explicit affine variety $\AffX$ such that finding the
  missing coefficients in the ring homomorphism is equivalent to finding a
  point on $\AffX$. 
\item[Step 4.] We find a point on the variety $\AffX$. This completes our calculation for $X(4,4)$.
\item[Step 5.] We obtain the balanced ribbon as a hyperplane section of $X(4,4)$.
\end{enumerate}

The \texttt{Macaulay2} code for the calculations below is posted on the author's website \cite{Code}.

\textit{Step 1:} First, we obtain the quadrics defining the ideals of $\SpGr$ and $X(4,4)$. A set of quadrics $\{f_i\}$ defining $\SpGr$ is obtained as described in Section \ref{subsubsec: SpGr36}. A set of quadrics $\{g_j\}$ defining $X(4,4)$ is obtained from \cite{ES2019}*{Theorem 3.5}, which is implemented in the \texttt{Macaulay2} package \texttt{K3Carpets}.

\textit{Step 2.} We briefly review some features of representation
$V_9 = V(0,0,1)$ of $\mathfrak{sp}(6)$; see \cite{FH}*{Section 17.1}
for more details. In the basis given
by $L_1,L_2,L_3$, the weights of $V_9$ lie on the
vertices and midpoints of the faces of a cube in
$\mathbb{R}^3$. See Figure \ref{fig:V9weights}.  

\begin{figure}[h]
        \caption{Weights of $V_9$}
        \label{fig:V9weights}    
\begin{center}
  \scalebox{0.15}{\includegraphics{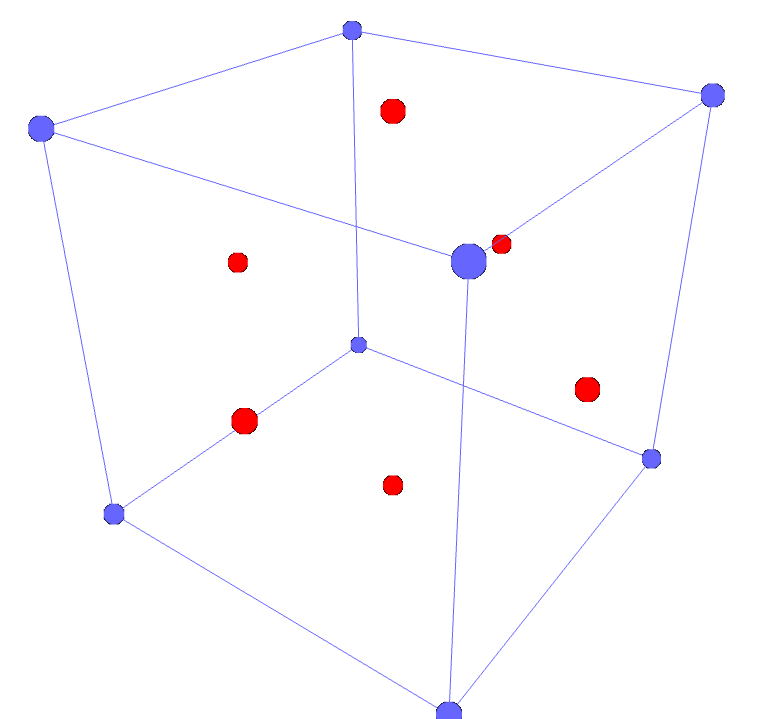}}
\end{center}
\end{figure}
The balanced K3 carpet has automorphism group $\SL_2 \times \SL_2$.
This automorphism group contains a $\mathbb{G}_m$ acting with weights
$-4,-3,-2,-1,0,0,1,2,3,4$. The balanced ribbon is a hyperplane section
of the balanced K3 carpet, and it has a $\mathbb{G}_m$ action with weights
$-4,-3,-2,-1,0,1,2,3,4$.

How could we map the weights of $V_9$ onto these $\mathbb{G}_m$
weights for the K3 carpet? A little geometric tinkering suggests a
solution. Projecting the weights 
onto the line through the origin in the direction of $(3,2,1)$ yields
a set containing nine distinct, evenly spaced points. Specifically, consider the lattice generated by $(0,0,0)$ and $\frac{1}{14}(3,2,1)$,
which is the projection of $L_3$ onto $(3,2,1)$. Then the images of the weights of $V_9$ in this lattice are 
are $-6, -4, -3, -2, -2, -1, 0, 0, 1, 2, 2, 3, 4, 6$. See Figure
\ref{fig:V9weightprojection}.

\begin{figure}[h]
        \caption{Projection of weights of $V_9$}
        \label{fig:V9weightprojection}    
\begin{center}
  \scalebox{0.15}{\includegraphics{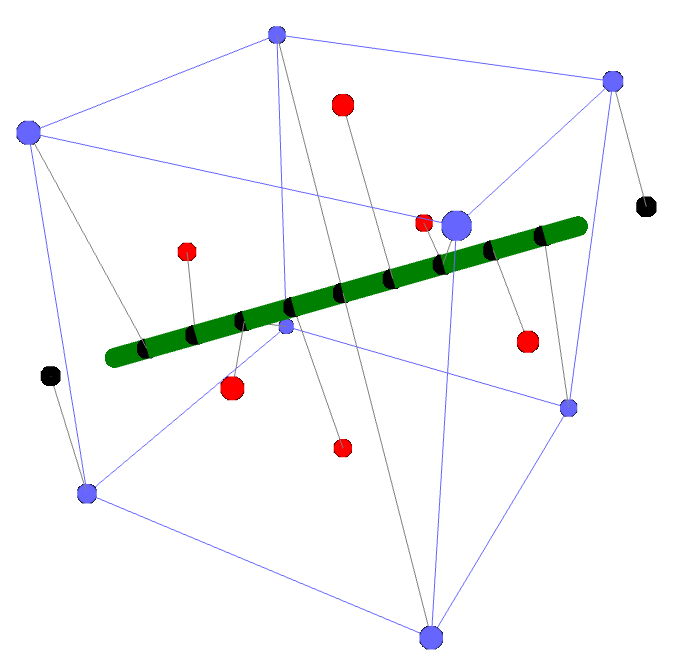}}
\end{center}
\end{figure}

To match the $\mathbb{G}_m$ weights on $X(4,4)$, we must kill the weights that map to $\pm 6$, and
choose one-dimensional subspaces of the weights that map to $\pm 2$, which
have dimension two each.

We write $I \subseteq R = \mathbb{C}[z_0,\ldots,z_{13}]$ for the ideal of $\SpGr$ and
$J \subseteq S=\mathbb{C}[a_0,\ldots,a_9]$ for the ideal of the
balanced K3 carpet. The observations above suggest that we search for a ring homomorphism $\psi:
R\rightarrow S$
of the following form.
\begin{equation}
 \begin{array}{lcrclcr}
z_0 &\mapsto &  0   & \qquad & z_7 &\mapsto & c_7a_4\\
z_1 &\mapsto & c_1a_5 && z_8 &\mapsto &c_8a_3\\
z_2 &\mapsto & c_2 a_6 && z_9 &\mapsto & c_9a_2\\
z_3 &\mapsto & c_3 a_7 && z_{10} &\mapsto &c_{10}a_2\\
z_4 &\mapsto & c_4a_7 && z_{11} &\mapsto & c_{11}a_1\\
z_5 &\mapsto & c_5a_8 && z_{12} &\mapsto & c_{12} a_0\\
z_6 &\mapsto & c_6 a_9 && z_{13} &\mapsto & 0
 \end{array}
\end{equation}

This in turn yields hyperplanes of the form $z_0 = 0$, $z_{13}
= 0$, $c_4 z_3-c_3z_4$, and $c_{10} z_9-c_{9} z_{10}$.

\textit{Step 3:} We compare the quadrics defining the ideals of $\SpGr$ and $X(4,4)$. 

We find that for 19 of the 21 quadrics $f_i$, the support of $\psi(f_i)$ matches the support of one of the quadrics $g_j$. The remaining two pairs of quadrics can also be matched after replacing the quadrics $g_j$ by a linear combination in a straightforward way (e.g. replace $a_{2}^{2}-a_{1}a_{3}$ by $(a_{2}^{2}-a_{1}a_{3}) + (a_{1}a_{3}-a_{0}a_{4})$ to obtain $a_{2}^{2}-a_{0}a_{4}$). 

We thus obtain the following table.
\[
\begin{array}{lll}
\text{Quadrics $f_i$ defining $\SpGr$} & \psi(f_i) & \text{Quadrics $g_i$ defining $X(4,4)$}\\
-z_5^2+z_4 z_6-z_0 z_7    &    -c_5^2 a_8^2+c_4 c_6 a_7 a_9    &    a_8^2-a_7 a_9\\
z_3 z_5-z_2 z_6-z_0 z_8    &    c_3 c_5 a_7 a_8-c_2 c_6 a_6 a_9    &    a_7 a_8-a_6 a_9\\
-z_3 z_4+z_2 z_5-z_0 z_9    &    -c_3 c_4 a_7^2+c_2 c_5 a_6 a_8    &    a_7^2-a_6 a_8\\
-z_3^2+z_1 z_6-z_0 z_10    &    -c_3^2 a_7^2+c_1 c_6 a_5 a_9    &    a_7^2-a_5 a_9\\
z_2 z_3-z_1 z_5-z_0 z_11    &    c_2 c_3 a_6 a_7-c_1 c_5 a_5 a_8    &    a_6 a_7-a_5 a_8\\
-z_2^2+z_1 z_4-z_0 z_12    &    -c_2^2 a_6^2+c_1 c_4 a_5 a_7    &    a_6^2-a_5 a_7\\
-z_11^2+z_10 z_12-z_1 z_13    &    -c_{11}^2 a_1^2+c_{10} c_{12} a_0 a_2    &    a_1^2-a_0 a_2\\
z_9 z_11-z_8 z_12-z_2 z_13    &    c_9 c_{11} a_1 a_2-c_8 c_{12} a_0 a_3    &    a_1 a_2-a_0 a_3\\
-z_9 z_10+z_8 z_11-z_3 z_13    &    -c_9 c_{10} a_2^2+c_8 c_{11} a_1 a_3    &    a_2^2-a_1 a_3\\
-z_9^2+z_7 z_12-z_4 z_13    &    -c_9^2 a_2^2+c_7 c_{12} a_0 a_4    &    a_2^2-a_0 a_4\\
z_8 z_9-z_7 z_11-z_5 z_13    &    c_8 c_9 a_2 a_3-c_7 c_{11} a_1 a_4    &    a_2 a_3-a_1 a_4\\
-z_8^2+z_7 z_10-z_6 z_13    &    -c_8^2 a_3^2+c_7 c_{10} a_2 a_4    &    a_3^2-a_2 a_4\\
z_1 z_7+z_2 z_8+z_3 z_9-z_0 z_13    &    c_1 c_7 a_4 a_5+c_2 c_8 a_3 a_6+c_3 c_9 a_2 a_7    &    a_4 a_5-2 a_3 a_6+a_2 a_7\\
z_1 z_8+z_2 z_10+z_3 z_11    &    c_1 c_8 a_3 a_5+c_2 c_{10} a_2 a_6+c_3 c_{11} a_1 a_7    &    a_3 a_5-2 a_2 a_6+a_1 a_7\\
z_1 z_9+z_2 z_11+z_3 z_12    &    c_1 c_9 a_2 a_5+c_2 c_{11} a_1 a_6+c_3 c_{12} a_0 a_7    &    a_2 a_5-2 a_1 a_6+a_0 a_7\\
z_2 z_7+z_4 z_8+z_5 z_9    &    c_2 c_7 a_4 a_6+c_4 c_8 a_3 a_7+c_5 c_9 a_2 a_8    &    a_4 a_6-2 a_3 a_7+a_2 a_8\\
z_2 z_8+z_4 z_10+z_5 z_11-z_0 z_13    &    c_2 c_8 a_3 a_6+c_4 c_{10} a_2 a_7+c_5 c_{11} a_1 a_8    &    a_3 a_6-2 a_2 a_7+a_1 a_8\\
z_2 z_9+z_4 z_11+z_5 z_12    &    c_2 c_9 a_2 a_6+c_4 c_{11} a_1 a_7+c_5 c_{12} a_0 a_8    &    a_2 a_6-2 a_1 a_7+a_0 a_8\\
z_3 z_7+z_5 z_8+z_6 z_9    &    c_3 c_7 a_4 a_7+c_5 c_8 a_3 a_8+c_6 c_9 a_2 a_9    &    a_4 a_7-2 a_3 a_8+a_2 a_9\\
z_3 z_8+z_5 z_10+z_6 z_11    &    c_3 c_8 a_3 a_7+c_5 c_{10} a_2 a_8+c_6 c_{11} a_1 a_9    &    a_3 a_7-2 a_2 a_8+a_1 a_9\\
z_3 z_9+z_5 z_11+z_6 z_12-z_0 z_13    &    c_3 c_9 a_2 a_7+c_5 c_{11} a_1 a_8+c_6 c_{12} a_0 a_9    &    a_2 a_7-2 a_1 a_8+a_0 a_9
\end{array}
\]

We now define an affine variety $\AffX$ as follows. First, we require that
$\psi(f_i)$ is a scalar multiple of $g_i$ for each $i$. For
example, when $i=1$, we have $\psi(f_1) =-c_5^2 a_8^2+c_4 c_6 a_7 a_9 $ and $g_1 = a_8^2-a_7 a_9$. Then 
$\psi(f_i)$ is a scalar multiple of $g_i$ if the $2 \times 2$
minors of the matrix
\[
 \left[ \begin{array}{cc}
    -c_5^2 & c_4 c_6 \\
    1 & -1
  \end{array} \right]
\]
vanish. Thus, we include the equation $c_5^2-c_4c_6$ in the defining ideal of $\AffX$ for
$i=1$. We treat the remaining quadrics in a similar fashion. 

Finally, we also require that the coefficients $c_j \neq 0$ for
$j=1,\ldots, 12$. To ensure this, we include equations of the form
$c_j d_j -1$ for each $j$ in this range. 

This yields the following ideal of an affine variety $\AffX \subset \mathbb{A}^{24}$.

\begin{gather*}
  \AffX = \langle c_5^2-c_4 c_6, -c_3 c_5+c_2 c_6, c_3 c_4-c_2 c_5, c_3^2-c_1 c_6, -c_2 c_3+c_1 c_5, c_2^2-c_1 c_4,\\
c_{11}^2-c_{10} c_{12}, -c_9 c_{11}+c_8 c_{12}, c_9 c_{10}-c_8 c_{11}, c_9^2-c_7 c_{12}, -c_8 c_9+c_7 c_{11}, c_8^2-c_7 c_{10},\\
-2 c_1 c_7-c_2 c_8, c_1 c_7-c_3 c_9, -2 c_1 c_8-c_2 c_{10}, c_1 c_8-c_3 c_{11}, -2 c_1 c_9-c_2 c_{11}, c_1 c_9-c_3 c_{12},\\
-2 c_2 c_7-c_4 c_8, c_2 c_7-c_5 c_9, -2 c_2 c_8-c_4 c_{10}, c_2 c_8-c_5 c_{11}, -2 c_2 c_9-c_4 c_{11}, c_2 c_9-c_5 c_{12},\\
-2 c_3 c_7-c_5 c_8, c_3 c_7-c_6 c_9, -2 c_3 c_8-c_5 c_{10}, c_3 c_8-c_6 c_{11}, -2 c_3 c_9-c_5 c_{11},  \\
c_3 c_9-c_6 c_{12},c_1 d_1-1,c_2 d_2-1,c_3 d_3-1,c_4 d_4-1,c_5 d_5-1,c_6 d_6-1,\\
c_7 d_7 -1, c_8 d_8 -1, c_9 d_9 -1, c_{10} d_{10} -1, c_{11} d_{11} -1, c_{12} d_{12} -1\rangle.
\end{gather*}

We find that $\AffX$ has dimension 4, and is reduced and irreducible.

\textit{Step 4.} We find a point on the variety $\AffX$. By intersecting $\AffX$ with the hyperplanes $c_1 = 1$, $c_2 = 1$, $c_{11}=1$, $c_{12}=1$, we obtain a point on $\AffX$ whose first twelve coordinates $c_i$ are   $(1, 1, -\frac{1}{2}, 1, -\frac{1}{2}, \frac{1}{4}, \frac{1}{4}, -\frac{1}{2}, -\frac{1}{2}, 1, 1, 1)$. 

This yields a set of coefficients $c_i$ defining a ring homomorphism $\psi$ with
the desired properties.

In summary, let $\psi: \mathbb{C}[z_0,\ldots,z_{13}] \rightarrow \mathbb{C}[a_0,\ldots,a_9] $ be the following map.
 \begin{equation}
 \begin{array}{lcrclcr}
z_0 &\mapsto &  0   & \qquad & z_7 &\mapsto &\frac{1}{4}a_4\\
z_1 &\mapsto & a_5 && z_8 &\mapsto  &-\frac{1}{2} a_3\\
z_2 &\mapsto & a_6 && z_9 &\mapsto  & -\frac{1}{2} a_2\\
z_3 &\mapsto & -\frac{1}{2} a_7 && z_{10} &\mapsto &a_2\\
z_4 &\mapsto & a_7 && z_{11} &\mapsto & a_1\\
z_5 &\mapsto & -\frac{1}{2} a_8 && z_{12} &\mapsto & a_0\\
z_6 &\mapsto & \frac{1}{4} a_9 && z_{13} &\mapsto & 0
 \end{array}
\end{equation}
 Then a calculation in \texttt{Macaulay2} confirms that $\psi(I(\SpGr)) = I(X(4,4))$.

 Thus $X(4,4) = \SpGr \cap H_1 \cap H_2 \cap H_3 \cap H_4$, where the hyperplanes $H_1,\ldots,H_4$ are defined by
 \begin{gather*}
   z_0,\\
   z_{13},\\
   z_3+\frac{1}{2}z_4,\\
  z_9+\frac{1}{2}z_{10}.
 \end{gather*}
 respectively.

 \textit{Step 5.} Finally, we obtain the balanced ribbon as a hyperplane section of the balanced K3 carpet. In the coordinates used here, the additional hyperplane is $a_4=a_5$, which corresponds to the hyperplane $z_1-4z_7$.
\end{proof}

\section{Proof of the Main Theorem and remarks on computational performance} \label{sec: proof}

\subsection{Strategy} Consider the descriptions of the polynomials
$F_{g,d}$ given in Section \ref{sec: invariant polynomials}.

These polynomials can be constructed by repeating the following task
several times: \textit{given a $\mathfrak{g}$-module $W$ and a weight $\mu$,
find a submodule $U\subset W$
that is isomorphic to the irreducible representation $V(\mu)$.}  

We can do this as follows.
\begin{enumerate}
\item Compute the explicit action of the lowering operators of
  $\mathfrak{g}$ on $W$.
\item Find a highest weight vector of weight $\mu$ in $W$.
\item Choose a basis of $V(\mu)$ and express each basis element
  as a word in the lowering operators applied to a highest weight vector.
\item Evaluate these words acting on the highest weight vector to obtain
  a basis of $U \cong V(\mu)$.
\end{enumerate}

Each of these steps is implemented in the \texttt{LieAlgebraRepresentations} package
for \texttt{Macaulay2}.

\subsection{Computational performance}
Here are some statistics associated to the calculations associated to
constructing the polynomials $F_{g,d}$. These were performed on a 2020
MacBook Pro with Apple M1 chip and 16GB memory, running macOS Sonoma
14.7 and \texttt{Macaulay2} version 1.25.11. 
\begin{center}
\begin{tabular}{lll}
  Polynomial & Time to construct $F_{g,d}$ (hh:mm:ss) & Output file size \\
  $F_{7,1}$ & 24:02:24 & 12.6MB \\
  $F_{7,2}$ & 00:50:12 & 123KB \\
  $F_{8,1}$ & 04:33:03 & 4.1MB \\
  $F_{8,2}$ & 08:40:52 & 602KB \\
  $F_{9,1}$ & 00:01:13 & 83KB \\
  $F_{9,2}$ & 00:00:27 & $<1$KB \\
  $F_{10,1}$ & 00:00:12& 2KB \\
  $F_{10,2}$ & 00:00:09 & $<1$KB \\
\end{tabular}
\end{center}

We see that in terms of time and memory, the calculations to construct $F_{g,d}$ are easier for surfaces than they are for curves, and easier as $g$ increases. This
is opposite the usual trends in algebraic geometry and can be partly explained
by three observations.

First, as shown in the table below, the dimensions of the groups and representations involved
decrease as $g$ increases. Second, for $g \geq 8$, we have $g \geq \frac{1}{2} \dim V_g^{*}$, so
that $\dim \mywedge^{g+1} V_g^{*} < \dim \mywedge^{g} V_g^{*} $. Finally, the representations $\mywedge^{k} V_g^{*} $ (where $k=g$ for curves and $k=g+1$ for surfaces)  break up into larger number of smaller irreducibles as $g$ increases, and for
surfaces compared to curves. 

\begin{center}
\begin{tabular}{llll}
  Moduli problem & $\dim V_g^{*}$ & $\dim G_g$ & Dimensions of irreducibles in $\mywedge^{k} V_g^{*} $\\
 $\mukai^{1}_7$ & 16 & 45 & 8800, 2640 \\
  $\mukai^{2}_7$ & 16 & 45 & 8085, 4125, 660 \\
 $\mukai^{1}_8$ & 15 & 35 & 2430, 2205, 1800 \\
  $\mukai^{2}_8$ &15 & 35 & 3675, 840, 490 \\
 $\mukai^{1}_9$ &14 & 21&  1386, 350, 252, 14\\
  $\mukai^{2}_9$ & 14 & 21 & 525, 385, 90, 1 \\
 $\mukai^{1}_{10}$ & 14 & 14 & 448, 189, 182, 77, 64, 27, 14 \\
 $\mukai^{2}_{10}$ & 14 & 14 & 182, 77, 77, 27, 1 \\           
\end{tabular}
\end{center}

\subsection{Proof of the main theorem}

Now that we have the polynomials $F_{g,d}$ from Section \ref{sec:
  invariant polynomials} and the linear spaces described in Theorem
\ref{thm: points in parameter space}, we can prove the Main Theorem. 

\begin{proof}
  For each singular curve or surface described in Theorem
  \ref{thm:main}, we evaluate the corresponding $G_g$-invariant
  polynomial described in Section \ref{sec: invariant polynomials} at
  the point in the parameter space $\Gr(g,V_g)$ or $\Gr(g+1,V_g)$
  described in Theorem \ref{thm: points in parameter space}, and
  obtain a nonzero value. See the author's website \cite{Code} for the
  \texttt{Macaulay2} code.
\end{proof}


\subsection{Invariant polynomials for $\mukai^{2}_8$}
We studied some additional invariant polynomials related to Mukai's
model $\mukai^{2}_8$. We describe them up to scaling. 

First, we define several $\SL_6$ modules as follows. We have $\mywedge^9 V(0,1,0,0,0)^{*} \cong W_1 \oplus W_2 \oplus W_3$, where
$W_1 \cong V(1,1,0,1,1)$, $W_2 \cong V(3,0,1,0,0)$, and $W_3 \cong
V(0,0,0,3,0)$. These submodules have dimensions
3675, 840, and 490, respectively. Finally, $\Sym^2 W_2$ contains a
summand $U \cong V(0,3,0,0,0)$.

Now we define four invariant polynomials as follows.

\begin{enumerate}
\item[(a)] $\Sym^3W_3$ contains an invariant that we call $F_{8,2a}$. We
  say $F_{8,2a}$ is supported on $W_3$.
\item[(b)] $\Sym^3  U$ contains an invariant. This defines
an invariant that we call $F_{8,2b}$. It is supported on $W_2$.  
\item[(c)] $W_3 \otimes U$ contains an invariant. This defines
an invariant that we call $F_{8,2c}$. It is supported on $W_2$ and $W_3$
\item[(d)] Finally, $W_1$ is self-dual, so $W_1 \otimes
W_1$ contains an invariant. This defines $F_{8,2d} = F_{8,2}$ in Section
\ref{subsec: genus 8 dim 2}. The polynomial $F_{8,2}$ is
supported on $W_1$.
\end{enumerate}

All three polynomials $F_{8,2a}$, $F_{8,2b}$, and $F_{8,2c}$ are
nonzero on the genus 8 tangent developable (Example 8a in Theorem \ref{thm: points in parameter space}), but vanish on the genus 8
nonreduced surface (Example 8c in Theorem \ref{thm: points in
  parameter space}).

In this sense, determining the GIT semistability of Example 8c was more difficult
than it was for the other examples in this study. It is GIT strictly semistable, and the trivial character is a vertex of the state polytope.

In principle, the general purpose functions in the
\texttt{LieAlgebraRepresentations} package could be used to compute $F_{8,2}$, but our attempts to do so
failed, even when performing the calculations on a server with large
amounts of memory. In any case, a far more reasonable approach is to
use the combinatorial formula described in the Appendix. This
application illustrates the usefulness of this formula.

\subsection{Comparison to earlier work}

In \cite{Swinarski2025}, a less efficient algorithm was used to evaluate the less strategically chosen $\Spin(10)$-invariant polynomial described there, and the calculations analogous to those described  in the proof of  
Theorem \ref{thm:main} required very large amounts of time and memory. They
were accomplished by parallel calculations on four AWS
\texttt{r5.24xlarge} instances, each with 96 vCPUs and 768 GB
memory. This took approximately 36 hours, and cost over \$1000.

In contrast, using the new approach described here, the calculations
to construct the invariant polynomial $F_{7,1}$ run in 24 
hours on a 2020 MacBook Pro. 

Due to the amount of memory required by the approach used in
\cite{Swinarski2025}, the output of the intermediate steps of the
calculation was not saved;
as a result, evaluating new examples with that code would be as expensive as the
first calculation. In contrast, in the approach taken here, all the
data needed to evaluate the polynomials $F_{g,d}$ is saved when the
polynomials are constructed, and the
marginal cost of evaluating these polynomials on new examples is
trivial.

\subsection{Checking invariance}

As mentioned in the previous paragraph, all the
data needed to evaluate the polynomials $F_{g,d}$ is saved when the
polynomials are constructed, and the
marginal cost of evaluating these polynomials on new examples is
trivial compared to the cost of constructing them. In particular, we can experimentally test the $G$-invariance of
the polynomials $F_{g,d}$ by applying a random element of $G$ and
checking that the value of $F_{g,d}$ remains unchanged. See
\cite{Code} for some calculations of this type.

\appendix

\section{A combinatorial formula for an $\SL_n$-invariant}

Let $G$ be a simple algebraic group over $\mathbb{C}$. Let $V(\lambda)$ be an
irreducible representation with highest weight $\lambda$. Then the
space of invariants of the following form is one-dimensional.
\[
\dim (V(\lambda) \otimes V(\lambda^{*}))^{G} = 1
\]

Many bases of the irreducible representations $V(\lambda)$ have been
introduced in the literature, particularly when $G = \SL_n$.  In this
note we work with the Gelfand-Tsetlin basis for these representations. See
\cite{Molev2006}*{Section 2} for an introduction.

Our goal is to prove the following formula. The notation will be
explained below in Section \ref{sec:Notation}.

\begin{theorem} \label{theorem: invariant polynomial formula}
  Let
  \[
F = \sum_{P \in \GT(\lambda)} 
\frac{(-1)^{\ell(P)}}{\| P \| \| P^{*}\|} e_P \otimes e_{P^{*}}.
  \]
Then $F$ is an $\SL_n$-invariant vector in $V(\lambda) \otimes V(\lambda^{*})$.
\end{theorem}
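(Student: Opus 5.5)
We argue by identifying $V(\lambda^{*})$ with the dual space $V(\lambda)^{*}$ and the Gelfand--Tsetlin basis $\{e_{P^{*}}\}$ of $V(\lambda^{*})$ with a rescaled dual basis of $\{e_P\}$; the vector $F$ then becomes the canonical invariant $\sum_P e_P\otimes e_P^{\vee}$, i.e.\ the identity map of $V(\lambda)$. Concretely we use the following standard facts.
\begin{itemize}
\item $V(\lambda)$ carries a contravariant Hermitian form for the compact real form $\mathrm{SU}(n)$, for which $E_{ij}^{\dagger}=E_{ji}$.
\item The Gelfand--Tsetlin vectors $e_P$ are orthogonal for this form, with squared norm $\|P\|^2$ (or $\|P\|$, matching the paper's normalization).
\item The pattern $P^{*}$ is obtained from $P$ by the involution $\lambda\mapsto\lambda^{*}=(-\lambda_n,\dots,-\lambda_1)$ applied row by row, with entries reversed and negated.
\end{itemize}
With $\ell(P)$ a signed length (such as $\sum$ of entries weighted by row index, whose parity changes under single-box moves), $F$ should be, up to a global scalar, the image of $\mathrm{id}_{V(\lambda)}$ under $V(\lambda)\otimes V(\lambda)^{*}\cong V(\lambda)\otimes V(\lambda^{*})$.

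Rather than constructing this isomorphism explicitly, we verify invariance directly. Since $\SL_n$ is connected, it suffices to show $X\cdot F=0$ for generators of $\mathfrak{sl}_n$, namely the Cartan elements $E_{kk}-E_{k+1,k+1}$ and the simple root vectors $E_{k,k+1}$, $E_{k+1,k}$. We proceed as follows.
\begin{enumerate}
\item \emph{Cartan elements.} The weight of $e_{P^{*}}$ is the negative of the weight of $e_P$, because weights are given by differences of row sums and the involution negates them. Each summand $e_P\otimes e_{P^{*}}$ therefore has weight zero.
\item \emph{Raising operator.} Apply $E_{k,k+1}$ using the explicit Gelfand--Tsetlin formulas (Molev, Section 2): $E_{k,k+1}e_P=\sum_i a_{k,i}(P)\,e_{P+\delta_{ki}}$, where the coefficient $a_{k,i}(P)$ is a ratio of products of linear forms in the shifted entries $l_{ki}=\lambda_{ki}-i+1$. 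Likewise $E_{k,k+1}$ acts on $e_{P^{*}}$ by lowering the entry of $P^{*}$ corresponding to $\lambda_{ki}$. So $E_{k,k+1}\cdot F$ collects, for each pair $(Q,Q')$ with $Q=P+\delta_{ki}$ and $Q'=P^{*}$, two contributions: one from $P$ and one from $Q^{*}$ related to $P^{*}$ by the matching move.
\item \emph{Cancellation.} The sign $(-1)^{\ell}$ flips between $P$ and $P+\delta_{ki}$. It remains to check
\[
\frac{a_{k,i}(P)}{\|P\|\,\|P^{*}\|}=\frac{a^{*}_{k,i}(P+\delta_{ki})}{\|P+\delta_{ki}\|\,\|(P+\delta_{ki})^{*}\|},
\]
where $a^{*}$ is the lowering coefficient computed in $V(\lambda^{*})$. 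Using $\|P+\delta_{ki}\|^2/\|P\|^2=a_{k,i}(P)/b_{k,i}(P+\delta_{ki})$, which is the adjointness $E_{k,k+1}^{\dagger}=E_{k+1,k}$, this reduces to a rational identity in the $l$'s. That identity follows because the involution sends $l_{ki}\mapsto -l_{k,k+1-i}+\text{const}$, under which the product formulas for $a$ and $b$ are interchanged.
\item \emph{Lowering operator.} The case of $E_{k+1,k}$ is symmetric, or follows from the raising case together with the weight-zero property and the Weyl group or Chevalley involution.
\end{enumerate}

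The main obstacle is step 3: pinning down the precise normalizations $\|P\|$ and the sign convention $\ell(P)$ so that the coefficient identity holds exactly, and not merely up to a sign depending on $k$ and $i$. We would first verify the identity for $n=2$, where $V(\lambda)\otimes V(\lambda^{*})$ gives the classical $\sum(-1)^j\binom{m}{j}^{-1}$-type invariant, and for $n=3$. These cases fix the conventions, after which we carry out the general bookkeeping of the product formulas.
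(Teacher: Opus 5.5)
Your plan is essentially the paper's proof: the paper packages the check $X\cdot F=0$ as a lemma (bases on which $\mathfrak{sl}_n$ acts by negative-transpose matrices yield the invariant $\sum_i e_i\otimes e_i^{*}$), and then verifies that property for the Chevalley generators exactly as in your steps 2--4, using Molev's coefficients, the ratio $\|P+\delta_{k,i}\|/\|P\|$, the reindexing $i\mapsto k+1-i$, and the parity flip of the level. One slip needs fixing: adjointness gives $\|P+\delta_{k,i}\|^2/\|P\|^2=b_{k,i}(P+\delta_{k,i})/a_{k,i}(P)$, not the reciprocal, and with the correct ratio (together with the nonnegativity of Molev's coefficients, which fixes the signs of the square roots) your step 3 becomes $a_{k,i}(P)\,b_{k,i}(P+\delta_{k,i})=a^{*}_{k,k+1-i}\bigl((P+\delta_{k,i})^{*}\bigr)\,b^{*}_{k,k+1-i}(P^{*})$, which holds because $l^{*}_{k,k+1-i}=x_{n,1}-k+1-l_{k,i}$ (note the shift depends on the row $k$) leaves the numerators of $a$ and $b$ unchanged and swaps their within-row denominators.
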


Algebraic combinatorics and invariant theory are research fields with
a long history and a large literature, and it would not surprise the
author if this formula is already known. Please contact
the author with relevant references.

Theorem \ref{theorem: invariant polynomial formula} follows easily
from formulas for the action of $\mathfrak{sl}_n$ in such works as
\cite{Molev2006} or \cite{Z}. Unfortunately, we have not yet located a single
reference that has all the results we need, and it is not clear that
the various sources we draw on adopt the same sign conventions. Thus, we aim to give a
self-contained exposition that explains what we implemented in the
\texttt{LieAlgebraRepresentations} package in \texttt{Macaulay2}.

The following discussion will be mostly
combinatorial in nature and develop only what we need. We adopted this
approach in the hope that it would be efficient and amenable to
formalization in the future. However, this 
leaves out lots of beautiful mathematics. We warmly encourage interested
readers to explore the references cited here for more details.

We computed a number of examples in \texttt{Macaulay2} to
experimentally verify the formulas in this note, and wrote code to verify to illustrate
each of the formulas in this note. These files may be found at the author's
webpage.  

\subsection{Notation} \label{sec:Notation}

\subsubsection{From weights to partitions}

We mostly follow the definitions given by Molev in \cite{Molev2006}*{Section 2},
except that we use $\lambda$ to denote a weight of $\mathfrak{sl}_n$
and denote the associated partition by $\pi(\lambda)$. (In \cite{Molev2006},
$\lambda$ is the partition.)

Let $\lambda = \sum a_i \omega_i$ be a dominant integral weight for
$\mathfrak{sl}_n$, written in the basis of fundamental
dominant weights $\omega_i$. The partition $\pi(\lambda)$ associated
to $\lambda$ has parts
\[
  \pi_i = \sum_{j=i}^{n-1} a_i
\]
for $i=1,\ldots,n-1$, and we set $\pi_n=0$. 

\subsubsection{The pattern $P$}
A \emph{Gelfand-Tsetlin pattern of shape  $\pi(\lambda)$} is a triangular array of the following form.

\[\begin{array}{ccccccccc} x_{n,1} & & x_{n,2} & & x_{n,3} & & \cdots
    && x_{n,n} \\ &x_{n-1,1} & & x_{n-1,2} & & \cdots & x_{n-1,n-1} &
    \\ && \ddots \\ && & x_{2,1} && x_{2,2} \\ &&&& x_{1,1}\end{array}
\]

Each entry $x_{i,j}$ is a nonnegative integer, the top row $x_{n,i}$ corresponds to $\pi(\lambda)$, and the entries satisfy the inequalities $x_{k,i} \geq x_{k-1,i} \geq x_{k,i+1}$. 

We write $\GT(\lambda)$ for the set of all Gelfand-Tsetlin patterns of
shape $\pi(\lambda)$. This set indexes a basis of $V(\lambda)$.  We write
$e_P$ for the basis element corresponding to the pattern $P \in \GT(\lambda)$.

\subsubsection{The pattern $P^{*}$}
Let $P \in \GT(\lambda)$. We define the \emph{dual pattern} $P^{*}$  by the formula
\[
y_{i,j} = x_{n,1} - x_{j+1-i,j}.
\]

\subsubsection{The numerator $(-1)^{\ell(P)}$}

The \emph{level} $\ell(P)$ is defined as the level of the
weight $\wt(P)$. We briefly recall these definitions.

Let  $P \in \GT(\lambda)$. Then we define the \emph{content} of $P$ as
$(c_1,\ldots,c_n)$, where
\[
c_i = \sum_{j=1}^{i+1}x_{i+1,j} - \sum_{j=1}^{i}x_{i,j}.
\]

The \emph{weight} of $P$ is 
\begin{align*}
  \wt(P) &= \sum_{i=1}^{n-1} (c_i - c_{i+1}) \omega_i \\
  &= \sum_{i=1}^{n-1} \left(-\sum_{j=1}^{i-1} x_{i-1,j} + 2\sum_{j=1}^{i} x_{i,j} - \sum_{j=1}^{i+1} x_{i+1,j}\right) \omega_i.
\end{align*}

Let $\mu$ be a weight occuring in the representation $V(\lambda)$. Then we may write $\mu = \lambda - \sum_{i=1}^{n-1} k_i \alpha_i$, where the $\alpha_i$ are the simple roots for $\mathfrak{sl}_n$. Then we define the \emph{level} of $\mu$ as $\ell(\mu) = \sum_{i=1}^{n-1} k_i$. See for instance \cite{deGraaf2001}*{Section 4}.

\subsubsection{The norm $\| P\|$}

Write $l_{i,j} = x_{i,j} -j+1$.

Following \cite{Molev2006}*{Proposition 2.4}, we define $\| P \|$ by 
\[
\| P \|^2 = \displaystyle \prod_{t=2}^{n} \displaystyle \prod_{1 \leq
  p \leq q < t} \frac{(l_{t,p} - l_{t-1,q})!}{(l_{t-1,p} -
  l_{t-1,q})!} \displaystyle \prod_{1 \leq p < q \leq t} \frac{(l_{t,p} - l_{t,q}-1)!}{(l_{t-1,p} - l_{t,q}-1)!} 
\]

We will frequently refer to the four products appearing in this formula as $N_1$, $D_1$, $N_2$, and $D_2$. That is, 
\begin{align*}
  N_1 &= \displaystyle\prod_{t=2}^{n}\prod_{1 \leq p \leq q < t} (l_{t,p} - l_{t-1,q})! \\
  D_1 &= \displaystyle\prod_{t=2}^{n} \prod_{1 \leq p \leq q < t} (l_{t-1,p} - l_{t-1,q})! \\
  N_2 &= \displaystyle\prod_{t=2}^{n} \prod_{1 \leq p < q \leq t} (l_{t,p} - l_{t,q}-1)! \\
  D_2 &= \displaystyle\prod_{t=2}^{n} \prod_{1 \leq p < q \leq t} (l_{t-1,p} - l_{t,q}-1)!
\end{align*}

\subsection{Proof of Theorem \ref{theorem: invariant polynomial formula} }

First, we state an easy lemma for obtaining invariant polynomials from
dual bases. Once again, the author welcomes references.

Let $\lambda$ be a dominant integral weight, and let $\lambda^{*}$ be its dual. Let $V(\lambda)$ and $V(\lambda^{*})$ be $\mathfrak{g}$-modules with these highest weights, and let $\rho_{\lambda}: \mathfrak{g} \rightarrow \mathfrak{gl}(V(\lambda))$ and $\rho_{\lambda^{*}}: \mathfrak{g} \rightarrow \mathfrak{gl}(V(\lambda^{*}))$ be the associated Lie algebra representations.

\begin{definition} \label{def: support negative transpose actions}
We say that bases $\mathscr{B}$ and $\mathscr{B}^{*}$ of $V(\lambda)$ and
$V(\lambda^{*})$ \emph{support negative transpose actions} if for any $X \in \mathfrak{g}$,
the matrices
\begin{align*}
  A &= [\rho_{\lambda} (X)]_{\mathscr{B}\leftarrow
        \mathscr{B}}\\
  B &= [\rho_{\lambda^{*}} (X)]_{\mathscr{B}^{*}\leftarrow
        \mathscr{B}^{*}}
\end{align*}
satisfy $B = -A^t$.
\end{definition}

The point of this definition is that we want to say $\mathscr{B}$ and $\mathscr{B}^{*}$
are dual bases, but we haven't given an explicit isomorphism
$V(\lambda^{*}) \cong (V(\lambda))^{*}$ yet. So instead we find bases that have the property that dual bases ought to have, and use them to define the isomorphism $V(\lambda^{*}) \cong (V(\lambda))^{*}$.

\begin{lemma} \label{lem: invariants from negative transpose actions}
Suppose that $\mathscr{B}$ and $\mathscr{B}^{*}$ support
negative transpose actions. Then  
\[
F = \sum_{i=1}^{N} e_i \otimes e_i^{*}
\]
is a $G$-invariant vector.
\end{lemma}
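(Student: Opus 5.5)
The plan is to verify invariance infinitesimally and then pass to the group. Since $G$ is connected (a simple algebraic group over $\mathbb{C}$, and in the application $\SL_n$), a vector in a finite-dimensional rational representation is $G$-invariant if and only if it is annihilated by every $X \in \mathfrak{g}$. So it suffices to show that $X \cdot F = 0$ for all $X\in\mathfrak{g}$, where $\mathfrak{g}$ acts on $V(\lambda)\otimes V(\lambda^{*})$ by $X\cdot(v\otimes w) = \rho_\lambda(X)v\otimes w + v\otimes \rho_{\lambda^*}(X)w$.

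Write $\mathscr{B}=\{e_1,\ldots,e_N\}$ and $\mathscr{B}^{*}=\{e_1^{*},\ldots,e_N^{*}\}$, and set $A=[\rho_\lambda(X)]_{\mathscr{B}\leftarrow\mathscr{B}}$, $B=[\rho_{\lambda^*}(X)]_{\mathscr{B}^*\leftarrow\mathscr{B}^*}$. Then $\rho_\lambda(X)e_i=\sum_j A_{ji}e_j$ and $\rho_{\lambda^*}(X)e_i^{*}=\sum_j B_{ji}e_j^{*}$. Expanding gives
\[
X\cdot F=\sum_{i,j} A_{ji}\, e_j\otimes e_i^{*}+\sum_{i,j} B_{ji}\, e_i\otimes e_j^{*}.
\]
Relabel the second sum by swapping $i$ and $j$, so it becomes $\sum_{i,j}B_{ij}\,e_j\otimes e_i^{*}$. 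The coefficient of the basis tensor $e_j\otimes e_i^{*}$ is then $A_{ji}+B_{ij}$. The hypothesis $B=-A^{t}$ says exactly that $B_{ij}=-A_{ji}$, so every coefficient vanishes and $X\cdot F=0$.

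Finally, since $\{e_j\otimes e_i^{*}\}$ is a basis of the tensor product, $F$ is annihilated by all of $\mathfrak{g}$, and hence it is fixed by $G$ by connectedness. This last step can be justified concretely: $G$ is generated by the images of $\exp$, and $\exp(tX)F=F$ whenever $XF=0$.

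There is no real obstacle here. The only points needing care are the index bookkeeping, namely the column convention for matrices of operators, which is what makes the transpose appear, and stating clearly the passage from Lie algebra to group, which uses that $G$ is connected (true for simple algebraic groups over $\mathbb{C}$).
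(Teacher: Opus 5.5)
Your proposal is correct and follows the paper's proof: both expand $X\cdot F$ in the basis $\{e_j\otimes e_i^{*}\}$, use $B=-A^{t}$ to see that each coefficient $A_{ji}+B_{ij}$ vanishes, and then pass from annihilation by $\mathfrak{g}$ to $G$-invariance. Your explicit appeal to connectedness of $G$ (via $\exp$) is just a more careful version of the paper's closing remark that $F$ is a highest weight vector of weight $0$ and hence invariant.
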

\begin{proof}
  Let $X \in \mathfrak{g}$. Let $A$ and $B$ be the matrices shown in
  Definition \ref{def: support negative transpose actions}. Then we may compute as follows.
\begin{align*}
X(F) &= \sum_{i=1}^{N} X(e_i \otimes e_{i}^{*}) \\
&= \sum_{i=1}^{N}  X(e_i) \otimes e_{i}^{*} +  \sum_{j=1}^{N}  e_j \otimes X(e_{j}^{*})\\
&= \sum_{i=1}^{N}  (\sum_{k=1}^{N} a_{k,i} e_k) \otimes e_{i}^{*} +\sum_{j=1}^{N}e_j\otimes(\sum_{l=1}^{N}b_{l,j} e_l^{*}) \\
&= \sum_{i=1}^{N}  \sum_{k=1}^{N}  a_{k,i} e_k \otimes e_{i}^{*} +\sum_{j=1}^{N}\sum_{l=1}^{N} b_{l,j} e_j\otimes e_l^{*} \\
&= \sum_{i=1}^{N}  \sum_{k=1}^{N}  a_{k,i} e_k \otimes e_{i}^{*} +\sum_{j=1}^{N}\sum_{l=1}^{N} (-a_{j,l}) e_j\otimes e_l^{*} \\      
&= \sum_{i=1}^{N}  \sum_{k=1}^{N}  a_{k,i} e_k \otimes e_{i}^{*} +\sum_{l=1}^{N}\sum_{j=1}^{N} (-a_{j,l}) e_j\otimes e_l^{*} \\
&= 0.   
 \end{align*}

This implies $F$ is a highest weight vector of weight 0, hence an invariant.
\end{proof}

To prove Theorem \ref{theorem: invariant polynomial formula}, we scale
the Gelfand-Tsetlin bases of $V(\lambda)$ and $V(\lambda^{*})$ and
introduce signs to obtain bases supporting negative transpose actions, and
then apply Lemma \ref{lem: invariants from negative transpose actions}.

%

%
%
%

\begin{proposition} \label{prop: negative transpose actions}

The bases $\mathscr{B} = \{ \frac{1}{\|P \|} e_{P}\} $ and
$\mathscr{B}^{*} = \{ \frac{(-1)^{\ell(P^{*})}}{\|  P^{*} \|}
e_{P^{*}} \}$ support
negative transpose actions.

\end{proposition}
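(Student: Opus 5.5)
It suffices to check the negative transpose condition on a set of generators of $\mathfrak{sl}_n$, since the condition $B=-A^t$ is linear in $X$ and preserved by commutators: if $B_X=-A_X^t$ and $B_Y=-A_Y^t$, then
\[
B_{[X,Y]}=[B_X,B_Y]=[A_X^t,A_Y^t]=-[A_X,A_Y]^t=-A_{[X,Y]}^t.
\]
I will therefore use the generators $E_{k,k+1}$, $E_{k+1,k}$ and the Cartan elements $E_{kk}-E_{k+1,k+1}$, together with the explicit Gelfand--Tsetlin formulas of \cite{Molev2006}*{Section 2}. In the normalized basis $\xi_P = e_P/\|P\|$, these formulas take the form
\[
E_{k,k+1}\xi_P=\sum_{i} a_{k,i}(P)\,\xi_{P+\delta_{k,i}},\qquad E_{k+1,k}\xi_P=\sum_i a_{k,i}(P-\delta_{k,i})\,\xi_{P-\delta_{k,i}},
\]
where $\delta_{k,i}$ raises the entry $x_{k,i}$ by one. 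The matrix coefficients $a_{k,i}$ are, up to sign, square roots of products of linear factors in the $l_{t,j}$. These are exactly the quantities that make $\|P\|$ appear as in \cite{Molev2006}*{Proposition 2.4}, and the orthonormality of $\{\xi_P\}$ for the contravariant form implies that $E_{k+1,k}$ acts by the transpose of $E_{k,k+1}$ in $\mathscr{B}$.

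The Cartan part is straightforward. Each $\xi_P$ is a weight vector of weight $\wt(P)$, so Cartan elements act diagonally in $\mathscr{B}$. I will check from the definition $y_{i,j}=x_{n,1}-x_{j+1-i,j}$ that $\wt(P^{*})=-\wt(P)$, using the content formula and the fact that the top row of $P^*$ is $\pi(\lambda^*)$. This makes the diagonal matrices satisfy $B=-A^t$.

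For the root vectors, the main step is to understand how the involution $P\mapsto P^*$ interacts with the moves $\pm\delta_{k,i}$. Since $\wt(P^*)=-\wt(P)$, raising $P$ by $E_{k,k+1}$ corresponds to lowering $P^*$ by a simple root. I expect a move $+\delta_{k,i}$ on $P$ to correspond to a move $-\delta_{k',i'}$ on $P^*$ in the complementary row, with $k'=n-k$ for the simple root $\alpha_{n-k}$. If that holds, I would instead adopt the standard convention in which $V(\lambda^*)$ is twisted by $X\mapsto -X^t$; this map sends $E_{k,k+1}$ to $-E_{k+1,k}$, so the generators must be matched up to this outer twist. The needed identity then reduces to showing
\[
\frac{\|P+\delta\|}{\|P\|}\cdot a(P)=\frac{\|P^*\|}{\|(P+\delta)^*\|}\cdot a^{*}((P+\delta)^*)\cdot(\text{sign}),
\]
that is, that the unnormalized Gelfand--Tsetlin coefficient for the move on $P$ equals that of the corresponding move on $P^*$ after the norm ratios are accounted for. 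I will prove this by checking the factor groups $N_1,D_1,N_2,D_2$ separately under the substitution $l\mapsto$ (reflected $l$). Each linear factor $l_{t,p}-l_{t-1,q}$ should transform to a factor of the same type with indices reflected.

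The sign is handled by the level. Moving from $P$ to $P+\delta$ changes $\ell$ by exactly one, so $(-1)^{\ell(P^*)}$ flips across every root-vector matrix entry. This flip supplies precisely the minus sign in $B=-A^t$, given that the unnormalized coefficients themselves match with a plus sign.

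I expect the index bookkeeping in the norm comparison under $P\mapsto P^*$ to be the main obstacle. The reflection $y_{i,j}=x_{n,1}-x_{j+1-i,j}$ mixes rows and diagonals, so it is not obvious which factorial factors pair with which. My first attempt will be to verify small cases ($n=2,3$) to pin down the correspondence of moves, then prove the general factor-by-factor matching, falling back on sign conventions for the square roots if needed.
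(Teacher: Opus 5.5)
Your outer structure matches the paper's. You reduce to simple root vectors, show that the normalized coefficient of $P\to P+\delta_{k,i}$ equals that of the corresponding move between dual patterns, and take the minus sign from the parity change of $\ell(P^*)$; the paper proves $a_{I,J}=b^{+}_{J,I}$ and then uses $\wt(P^*)=\wt(P')+\alpha_k$. Your commutator reduction, and the orthonormality argument that gets $E_{k+1,k}$ from $E_{k,k+1}$, are correct and make explicit what the paper calls ``similar.'' The gap is that the core identity is only announced, and both concrete predictions you make about it are wrong. The verification would not go through as you describe it.

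\emph{The move correspondence stays in row $k$.} The duality reverses each row, $y_{k,j}=x_{n,1}-x_{k,k+1-j}$; this is the form the paper's proof actually uses, and the displayed definition is misindexed. Hence $(P+\delta_{k,i})^*+\delta_{k,k+1-i}=P^*$, and the \emph{same} operator $X_{\alpha_k}$ links the dual patterns. Your own claim $\wt(P^*)=-\wt(P)$ forces this: raising $\wt(P)$ by $\alpha_k$ lowers $\wt(P^*)$ by $\alpha_k$, not by $\alpha_{n-k}$.

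Your fallback is not legitimate in any case. The proposition, and the lemma that consumes it, concern $\rho_{\lambda^*}(X)$ for the same $X$ on both tensor factors. A row-$(n-k)$ correspondence would make $B=-A^t$ false outright. Twisting $V(\lambda^*)$ by $X\mapsto -X^t$ proves a different statement, and since it sends $E_{k,k+1}$ to $-E_{k+1,k}$, it would not repair an $\alpha_k$ versus $\alpha_{n-k}$ mismatch anyway.

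\emph{The norm factors do not go to factors of the same type.} The $N_1$-factor $l^*_{t,p^*}-l^*_{t-1,q^*}$ of $P^*$ equals the $D_2$-factor $l_{t-1,p}-l_{t,q}-1$ of $P$, with $p=t-q^*$ and $q=t+1-p^*$. So $N_1\leftrightarrow D_2$, while $D_1$ and $N_2$ are fixed, and in general $\|P^*\|\neq\|P\|$. Already for $\mathfrak{sl}_2$ with top row $(m,0)$ and $x_{1,1}=a$, one has $\|P\|^2=(m-a)!\,m!/a!$ but $\|P^*\|^2=a!\,m!/(m-a)!$; only the product $\|P\|\|P^*\|=N_2/D_1$ is rigid.

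The paper handles this in three steps. It computes $\|P\pm\delta_{k,i}\|/\|P\|$ explicitly and combines it with Molev's coefficients into closed-form normalized entries. It then matches the four products of linear factors via $q'\mapsto k-q$, $k+2-q$, $k+1-q$. In that matching the two denominator products $\prod_{q}(l_{k,i}-l_{k,q}+1)$ and $\prod_{q\neq i}(l_{k,i}-l_{k,q})$ trade places, and the signs $(-1)^{k\pm1}$ cancel in pairs.

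Alternatively, using $\|P\|\|P^*\|=N_2/D_1$ turns your displayed identity into a square-root-free identity between Molev's unnormalized coefficients: $c(P)=\frac{N_2(P)\,D_1(P+\delta)}{D_1(P)\,N_2(P+\delta)}\,c^*\bigl((P+\delta)^*\bigr)$. Either way, to pass from equal radicands to equal entries you need both normalized entries to be positive. They are, since Molev's coefficient is positive for every valid move.
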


To prove Proposition \ref{prop: negative transpose actions}, we use an
explicit formula for the action of the operators $X_{\alpha_k}$ and $Y_{\alpha_k}$ on
the bases $\mathscr{B}$ and $\mathscr{B}_{+}^{*} = \{ \frac{1}{\|  P^{*} \|}
e_{P^{*}} \}$. Formulas of this
type are found in \cite{GT}*{Eqn. (4) and (6)} and \cite{Z}*{\S 70 Thm. 7}, but
we develop this beginning with Molev's formulas to maintain consistent 
sign conventions across all formulas. 

\begin{proposition}[\cite{Molev2006}*{Theorem 2.3}] \label{prop: Molev
    action} \mbox{}
  \begin{enumerate}
    \item The coefficient of $e_{P + \delta_{k,i}}$ in $X_{\alpha_k} e_P$ is 
  \[
    \frac{(-1)
      \displaystyle \prod_{q=1}^{k+1}
      (l_{k,i}-l_{k+1,q})}{\displaystyle \prod_{\substack{q=1,\\q\neq i}}^{k} (l_{k,i}-l_{k,q})}.
    \]
  \item The coefficient of $e_{P - \delta_{k,i}}$ in $Y_{\alpha_k} e_P$ is 
      \[
    \frac{
      \displaystyle \prod_{q=1}^{k-1}
      (l_{k,i}-l_{k-1,q})}{\displaystyle \prod_{\substack{q=1,\\q\neq i}}^{k} (l_{k,i}-l_{k,q})}.
    \]
    \end{enumerate}
\end{proposition}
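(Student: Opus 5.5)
This proposition is a restatement of \cite{Molev2006}*{Theorem 2.3}, so the plan is a translation argument rather than a new computation. I will make the dictionary between Molev's conventions and ours explicit and check that every sign and index carries over.

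First, I fix the Chevalley generators. I take $X_{\alpha_k} = E_{k,k+1}$ and $Y_{\alpha_k} = E_{k+1,k}$, the matrix units of $\mathfrak{gl}_n$, restricted to $\mathfrak{sl}_n$. Molev states his theorem for $\mathfrak{gl}_n$ acting on the unnormalized Gelfand--Tsetlin basis $\xi_\Lambda$, where $\Lambda$ is a pattern with top row $\lambda_{n,1}\ge\cdots\ge\lambda_{n,n}$. I identify $\Lambda$ with our $P$ via $\lambda_{k,i}=x_{k,i}$, and $e_P$ with $\xi_\Lambda$. Our $l_{k,i}=x_{k,i}-i+1$ is exactly Molev's $l_{ki}=\lambda_{ki}-i+1$. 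Since $\pi_n=0$, the $\mathfrak{gl}_n$-module with top row $\pi(\lambda)$ restricts to $V(\lambda)$ for $\mathfrak{sl}_n$. The operators $E_{k,k+1}$ and $E_{k+1,k}$ lie in $\mathfrak{sl}_n$, so their action is unaffected by the restriction.

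Next, I match the pattern shifts. Molev's $\Lambda\pm\delta_{ki}$ changes only the entry $\lambda_{ki}$ by $\pm1$, and the convention is that $\xi_{\Lambda\pm\delta_{ki}}=0$ when the shifted array is not a pattern. This is our $P\pm\delta_{k,i}$. Reading off his formulas
\[
E_{k,k+1}\xi_\Lambda=-\sum_{i=1}^{k}\frac{\prod_{j=1}^{k+1}(l_{ki}-l_{k+1,j})}{\prod_{j\ne i}(l_{ki}-l_{kj})}\xi_{\Lambda+\delta_{ki}},\qquad
E_{k+1,k}\xi_\Lambda=\sum_{i=1}^{k}\frac{\prod_{j=1}^{k-1}(l_{ki}-l_{k-1,j})}{\prod_{j\ne i}(l_{ki}-l_{kj})}\xi_{\Lambda-\delta_{ki}}
\]
gives parts (1) and (2) directly.

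Finally, I check that the formulas never divide by zero. For $i\neq q$ on the same row the differences $l_{k,i}-l_{k,q}$ are nonzero, because $l_{k,\cdot}$ is strictly decreasing. I also check that the coefficient vanishes whenever the shifted array fails the betweenness inequalities. For example, if $x_{k,i}=x_{k+1,i}$ then $l_{k,i}=l_{k+1,i}$ and the numerator of (1) is zero. This confirms consistency with treating out-of-range patterns as zero.

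The main obstacle is convention-matching rather than mathematics: the direction of the triangular array, whether $E_{k,k+1}$ raises or lowers the $k$th row, and the overall sign. I will settle these by checking the case $n=2$. There $V$ has basis indexed by $x_{1,1}$, and $E_{12}$ acting on $e_P$ gives coefficient $-(l_{11}-l_{21})(l_{11}-l_{22})$, which is the standard $\mathfrak{sl}_2$ action up to the normalization of $\xi$. As an independent check I will use the \texttt{Macaulay2} verification on small examples mentioned above.
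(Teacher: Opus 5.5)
Your proposal is correct and matches the paper, which gives no independent argument: it simply quotes Molev's Theorem 2.3 with $X_{\alpha_k}=E_{k,k+1}$, $Y_{\alpha_k}=E_{k+1,k}$ and $l_{k,i}=x_{k,i}-i+1$. Your dictionary and your $n=2$ sign check are right.

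One side remark is false as stated, although nothing depends on it: the coefficients do not vanish whenever $P\pm\delta_{k,i}$ fails to be a pattern. The numerator in (1) only involves row $k+1$, and the numerator in (2) only involves row $k-1$. For example, take $n=3$ with $x_{3,\cdot}=(2,1,0)$, $x_{2,\cdot}=(1,0)$, $x_{1,1}=0$, and $k=i=2$. Then $P+\delta_{2,2}$ violates $x_{1,1}\geq x_{2,2}$, yet formula (1) gives $3/2$. So you should rely on Molev's convention that $\xi_\Lambda=0$ for non-patterns, together with the fact that the proposition (as the paper uses it) only concerns valid $P\pm\delta_{k,i}$, rather than on the coefficients vanishing.
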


To get the action on the normalized bases, we first obtain a formula
for the following ratio.
\begin{proposition} \label{prop: ratio of lengths} \mbox{}
  \begin{enumerate}
    \item
  \[
    \frac{\|P + \delta_{k,i} \|}{\| P \|} = \sqrt{\frac{(-1)
    \displaystyle  \prod_{q=1}^{k-1}(l_{k,i}-l_{k-1,q}+1) \displaystyle\prod_{\substack{q=1,\\q\neq i}}^{k} (l_{k,i}-l_{k,q})}{\displaystyle\prod_{q=1}^{k}(l_{k,i}-l_{k,q}+1) \displaystyle\prod_{q=1}^{k} (l_{k,i}-l_{k+1,q})}}.
\]
\item
    \[
    \frac{\|P - \delta_{k,i} \|}{\| P \|} = \sqrt{\frac{
    \displaystyle  \prod_{q=1}^{k+1}(l_{k,i}-l_{k+1,q}-1) \displaystyle\prod_{\substack{q=1,\\q\neq i}}^{k} (l_{k,i}-l_{k,q})}{\displaystyle\prod_{q=1}^{k-1}(l_{k,i}-l_{k-1,q}) \displaystyle\prod_{q=1}^{k} (l_{k,i}-l_{k,q}-1)}}.
\]
\end{enumerate}
\end{proposition}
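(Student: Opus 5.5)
The plan is to compute the ratio $\|P\pm\delta_{k,i}\|^2/\|P\|^2$ directly from Molev's product formula for $\|P\|^2$, i.e.\ from the four products $N_1,D_1,N_2,D_2$. Passing from $P$ to $P+\delta_{k,i}$ changes exactly one entry: $x_{k,i}$ goes up by one, so $l_{k,i}$ goes up by one and every other $l_{t,j}$ is unchanged. Hence in each of $N_1,D_1,N_2,D_2$ only the factorials involving $l_{k,i}$ change. Each such factorial $(m)!$ becomes $(m\pm 1)!$, so the ratio is a product of linear factors. I will list which factorials move, and in which direction.

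\textbf{Tracking the factorials.}
\begin{itemize}
\item[(i)] In $N_1$, the index $l_{k,i}$ occurs either as $l_{t,p}$ with $t=k$, $p=i$, giving the factors $(l_{k,i}-l_{k-1,q})!$ for $i\le q\le k-1$, or as $l_{t-1,q}$ with $t=k+1$, $q=i$, giving $(l_{k+1,p}-l_{k,i})!$ for $p\le i$.
\item[(ii)] In $D_1$, it occurs as $l_{t-1,p}$ or $l_{t-1,q}$ with $t=k+1$, giving $(l_{k,i}-l_{k,q})!$ for $q\ge i$ and $(l_{k,p}-l_{k,i})!$ for $p\le i$. The $p=q$ term is $0!$ and does not change.
\item[(iii)] In $N_2$, it occurs with $t=k$, giving $(l_{k,i}-l_{k,q}-1)!$ for $q>i$ and $(l_{k,p}-l_{k,i}-1)!$ for $p<i$.
\item[(iv)] In $D_2$, it occurs as $l_{t-1,p}$ with $t=k+1$, giving $(l_{k,i}-l_{k+1,q}-1)!$ for $q>i$, and as $l_{t,q}$ with $t=k$, giving $(l_{k-1,p}-l_{k,i}-1)!$ for $p<i$.
\end{itemize}
A factorial whose argument increases by one contributes its new argument to the ratio. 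A factorial whose argument decreases by one contributes the reciprocal of its old argument.

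\textbf{Collecting terms.} For $q\ge i$ the $N_1$ and $D_2$ contributions pair off into a uniform product over all $q$. For example, $(l_{k+1,p}-l_{k,i})!$ for $p\le i$ loses a factor $l_{k+1,p}-l_{k,i}=-(l_{k,i}-l_{k+1,p})$, while the $D_2$ terms with $q>i$ contribute $(l_{k,i}-l_{k+1,q})$ in the denominator; together these give $\prod_{q=1}^{k}(l_{k,i}-l_{k+1,q})$ up to sign. The same pairing on row $k-1$ produces $\prod_{q=1}^{k-1}(l_{k,i}-l_{k-1,q}+1)$. The row-$k$ factors from $D_1$ and $N_2$ combine into $\prod_{q\ne i}(l_{k,i}-l_{k,q})$ over $\prod_{q}(l_{k,i}-l_{k,q}+1)$, after noting that the $q=i$ factor equals $1$ and can be inserted freely.

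The main obstacle is the bookkeeping of signs. Reversing the order of a difference introduces a $-1$ each time. Counting these sign flips (there are $i$ from the row $k+1$ terms with $p\le i$, and $i-1$ from rows $k-1$ and $k$ with $p<i$, and so on) should leave exactly one net factor of $-1$, which is the $(-1)$ appearing in part (1). I would verify this parity carefully and sanity-check it on $n=2$, where $\|P\|^2=(x_{2,1}-x_{1,1})!\,(x_{1,1}-x_{2,2})!/(x_{2,1}-x_{2,2})!$ and the formula can be checked by hand. Taking square roots then gives (1), with the branch chosen so that $\|\cdot\|>0$; the radicand is positive by the interlacing inequalities.

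For (2) I will either repeat the computation with $l_{k,i}$ decreased by one, or simply apply (1) to $P'=P-\delta_{k,i}$ and invert. In $P'$ the entry is $l'_{k,i}=l_{k,i}-1$, and substituting this into (1) gives the reciprocal of the radicand in (2), after reindexing the row-$(k+1)$ product to run over $k+1$ entries. This reindexing is where I would double-check the stated ranges, since (1) uses $k$ entries of row $k+1$ while (2) uses $k+1$.
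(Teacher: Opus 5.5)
Your method is the same as the paper's: isolate the factorials in $N_1,D_1,N_2,D_2$ that contain $l_{k,i}$, and take ratios. Your lists (i)--(iv) are correct. They are in fact a bit more careful than the printed proof, whose $\hat N_2/N_2$ and $\hat D_2/D_2$ have $i\le q$ where $i<q$ is meant. Your row-$(k-1)$ and row-$k$ collections are also right. So is the parity count: $i$ flips from row $k+1$, $i-1$ from row $k-1$, and $2(i-1)$ from row $k$, leaving one net $-1$.

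The step that fails is the row-$(k+1)$ collection. By your own (i) and (iv), $N_1$ contributes $1/(l_{k+1,p}-l_{k,i})$ for $p=1,\dots,i$. $D_2$ contributes $1/(l_{k,i}-l_{k+1,q})$ for $q=i+1,\dots,k+1$, since at $t=k+1$ the range $p<q\le t$ allows $q=k+1$. Together these give $(-1)^i/\prod_{q=1}^{k+1}(l_{k,i}-l_{k+1,q})$, not a product over $q\le k$.

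Hence part (1) as printed is off by the factor $l_{k,i}-l_{k+1,k+1}$ and cannot be proved as stated; the upper limit should be $k+1$. For instance, take $n=2$, $\pi(\lambda)=(2,0)$ and $x_{1,1}=1\mapsto 2$. Then $\|P\|^2=2$ and $\|P+\delta_{1,1}\|^2=1$, so the true ratio squared is $1/2$. The printed radicand is $1$, while the corrected one is $1/2$.

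Your $n=2$ norm is also misstated, so that sanity check would have misled you. Molev's formula gives $\|P\|^2=(x_{2,1}-x_{1,1})!\,(x_{2,1}-x_{2,2})!/(x_{1,1}-x_{2,2})!$, not the version with the second ratio inverted.

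The corrected version is what the paper actually uses downstream. Multiplying Molev's coefficient by the ratio yields Proposition~\ref{prop: matrix entries}(1), with its $\prod_{q=1}^{k+1}$, only if the ratio carries $\prod_{q=1}^{k+1}$. So the mismatch you flagged at the end is not a reindexing issue but a misprint in (1).

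With (1) corrected, your inversion argument for (2) goes through verbatim. Apply (1) to $P-\delta_{k,i}$ and invert. The leading $-1$ is absorbed by the $q=i$ factor $l_{k,i}-l_{k,i}-1=-1$ of $\prod_{q=1}^{k}(l_{k,i}-l_{k,q}-1)$, and you get exactly the radicand in (2).
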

\begin{proof}
  We prove Part 1. The proof of Part 2 is similar.

  Write $\hat{P}=P+\delta_{k,i}$. This hat notation will only be used
  in this proof.  
  
$P$ and $\hat{P}$ are the same except in the entry $x_{k,i}$,
so any factors of $\| P\|$ and $\| \hat{P} \|$
that do not contain $l_{k,i}$ will cancel.

Write $\|\hat{P}\|^2 = \frac{\hat{N}_1 \hat{N}_2}{\hat{D}_1 \hat{D}_2}$ and $\|P \|^2 = \frac{N_1 N_2}{D_1 D_2}$. We study the ratios $\frac{\hat{N}_1}{N_1}$, $\frac{\hat{D}_1}{D_1} $, etc. one by one.

Consider
\[
\frac{\hat{N_1}}{N_1} = \prod_{t=2}^{n} \prod_{1 \leq p \leq q < t} \frac{(\hat{l}_{t,p}-\hat{l}_{t-1,q})!}{(l_{t,p}-l_{t-1,q})!}.
\]
We have $\hat{l}_{a,b} = l_{a,b}$ unless $a=k$ and $b=i$. 

When $t=k$ and $p=i$, we obtain the following noncancelling factors.
\[
  \prod_{i \leq q < k} \frac{(l_{k,i}+1-l_{k-1,q})!}{(l_{k,i}-l_{k-1,q})!} = \prod_{i \leq q < k} (l_{k,i}-l_{k-1,q}+1).
\] 
When $t-1=k$ and $q=i$, we obtain the following noncancelling factors.
\[
  \prod_{1 \leq p \leq i} \frac{(l_{k+1,p}-(l_{k,i}+1))!}{(l_{k+1,p}-l_{k,i})!} = \prod_{1 \leq p \leq i}  \frac{1}{(l_{k+1,p}-l_{k,i})}.
\] 
Thus
\[
\frac{\hat{N}_1}{N_1} = \frac{\prod_{i \leq q < k} (l_{k,i}-l_{k-1,q}+1)}{\prod_{1 \leq p \leq i} (l_{k+1,p}-l_{k,i})}.
\]

Analyzing the ratios $\frac{\hat{D_1}}{D_1} $, $\frac{\hat{N_2}}{N_2} $, and $\frac{\hat{D_2}}{D_2} $ in a similar fashion leads to the following formulas.
\begin{align*}
\frac{\hat{D}_1}{D_1} &= \frac{\prod_{i < q < k+1} (l_{k,i}-l_{k,q}+1)}{\prod_{1 \leq p < i}  (l_{k,p}-l_{k,i})}\\
\frac{\hat{N}_2}{N_2} &= \frac{\prod_{i \leq q \leq k} (l_{k,i}-l_{k,q})}{\prod_{1 \leq p <i}  (l_{k,p}-l_{k,i}-1)}\\
  \frac{\hat{D}_2}{D_2} &= \frac{\prod_{i \leq q \leq k+1} (l_{k,i}-l_{k+1,q})}{\prod_{1 \leq p < i} (l_{k-1,p}-l_{k,i}-1)}
\end{align*}

Combining these formulas and simplifying yields the result.
\end{proof}

Combining Propositions \ref{prop: Molev action} and \ref{prop: ratio
  of lengths} yields the following.
\begin{proposition} \label{prop: matrix entries} \mbox{}
  \begin{enumerate}
  \item  The coefficient of $\frac{1}{\| P + \delta_{k,i}\|} e_{P + \delta_{k,i}}$ in $X_{\alpha_k} \frac{1}{\| P \|} e_P$ is 
  \[
    \sqrt{\frac{(-1)
      \displaystyle \prod_{q=1}^{k-1}(l_{k,i}-l_{k-1,q}+1)
      \displaystyle \prod_{q=1}^{k+1}
      (l_{k,i}-l_{k+1,q})}{\displaystyle
      \prod_{q=1}^{k}(l_{k,i}-l_{k,q}+1) \displaystyle \prod_{\substack{q=1,\\q\neq i}}^{k} (l_{k,i}-l_{k,q})}}.
\]
\item The coefficient of $\frac{1}{\| P - \delta_{k,i}\|} e_{P -
    \delta_{k,i}}$ in $Y_{\alpha_k} \frac{1}{\| P \|} e_P$ is 
    \[
    \sqrt{\frac{
      \displaystyle \prod_{q=1}^{k-1}(l_{k,i}-l_{k-1,q})
      \displaystyle \prod_{q=1}^{k+1}
      (l_{k,i}-l_{k+1,q}-1)}{\displaystyle
      \prod_{q=1}^{k}(l_{k,i}-l_{k,q}-1) \displaystyle \prod_{\substack{q=1,\\q\neq i}}^{k} (l_{k,i}-l_{k,q})}}.
\]
\end{enumerate}
\end{proposition}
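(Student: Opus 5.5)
The statement is Proposition~\ref{prop: matrix entries}, and it follows by combining Proposition~\ref{prop: Molev action} with Proposition~\ref{prop: ratio of lengths}. Write $\hat e_P = \frac{1}{\|P\|} e_P$. If $X_{\alpha_k} e_P$ has coefficient $a$ on $e_{P+\delta_{k,i}}$, then
\[
X_{\alpha_k}\hat e_P = \frac{1}{\|P\|}\left(a\, e_{P+\delta_{k,i}}+\cdots\right) = a\,\frac{\|P+\delta_{k,i}\|}{\|P\|}\,\hat e_{P+\delta_{k,i}}+\cdots .
\]
So the desired coefficient is $a$ times the ratio in Proposition~\ref{prop: ratio of lengths}(1). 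The same reasoning applies to $Y_{\alpha_k}$, using the coefficient $b$ from Proposition~\ref{prop: Molev action}(2) and the ratio in Proposition~\ref{prop: ratio of lengths}(2).

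For part (1), take $a = -\prod_{q=1}^{k+1}(l_{k,i}-l_{k+1,q}) \big/ \prod_{q\ne i}(l_{k,i}-l_{k,q})$ and move it inside the square root by squaring it. Multiplying $a^2$ by the radicand of Proposition~\ref{prop: ratio of lengths}(1) gives the following cancellations. One factor $\prod_{q\neq i}(l_{k,i}-l_{k,q})$ in the denominator of $a^2$ cancels against the one in the numerator of the ratio, leaving a single copy in the denominator. The numerator of $a^2$ contains $\prod_{q=1}^{k+1}(l_{k,i}-l_{k+1,q})^2$. The ratio's denominator contains $\prod_{q=1}^{k}(l_{k,i}-l_{k+1,q})$, so one full product over $q\le k$ cancels. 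What remains is $\prod_{q=1}^{k}(l_{k,i}-l_{k+1,q})\cdot(l_{k,i}-l_{k+1,k+1})^2$.

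I expect this last step to be the main obstacle. The target formula has exactly one copy of $\prod_{q=1}^{k+1}(l_{k,i}-l_{k+1,q})$, so one extra factor $(l_{k,i}-l_{k+1,k+1})$ must be absorbed. My first attempt would be to recheck the bookkeeping in the proof of Proposition~\ref{prop: ratio of lengths}. The $\hat D_2/D_2$ computation already produces a product over $i\le q\le k+1$, which suggests the ratio's denominator should really run over $q=1,\dots,k+1$. With that correction the cancellation is exact and gives the stated radicand, including the sign $(-1)$.

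The remaining care is in choosing the branch of the square root. The expression for $a$ is a rational number. Its sign should be checked against the sign of the claimed square root, as a nonnegative real or $i$ times one, because the $(-1)$ inside makes the radicand's sign pattern matter. The identity is only asserted up to the convention that $\sqrt{\cdot}$ is the branch compatible with $\|\cdot\|$ as defined by the square root of $\|P\|^2$. For this reason I would state the identity at the level of squares and then fix signs using the positivity conventions implicit in Proposition~\ref{prop: ratio of lengths}. Part (2) is the same computation with $b^2$ and the second ratio. The factor $\prod_{q=1}^{k-1}(l_{k,i}-l_{k-1,q})$ appears squared in $b^2$ and once in the denominator of the ratio, leaving one copy. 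The other factors pass through unchanged.
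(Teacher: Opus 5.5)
Your proposal is correct and is the paper's own argument: its proof is the single line ``combining Propositions \ref{prop: Molev action} and \ref{prop: ratio of lengths}.'' You are also right that Proposition \ref{prop: ratio of lengths}(1) as printed has a typo. The last denominator product must be $\prod_{q=1}^{k+1}(l_{k,i}-l_{k+1,q})$, as the $\hat{D}_2/D_2$ factor indicates and as $n=2$ confirms ($\|P+\delta_{1,1}\|^2/\|P\|^2 = 1/((x_{2,1}-x_{1,1})(x_{1,1}+1))$, whereas the printed formula gives $1/(x_{2,1}-x_{1,1})$); with that correction the cancellation is exact.

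The branch question does not need an appeal to conventions. When $P\pm\delta_{k,i}$ is a valid pattern, interlacing gives exactly $i$ negative factors in $\prod_{q=1}^{k+1}(l_{k,i}-l_{k+1,q})$, and exactly $i-1$ negative factors in each of $\prod_{q\neq i}(l_{k,i}-l_{k,q})$ and $\prod_{q=1}^{k-1}(l_{k,i}-l_{k-1,q})$. So both of Molev's coefficients are positive, and the normalized coefficient is the positive square root of the (then positive) radicand.
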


Now we are ready to prove Proposition \ref{prop: negative transpose actions}.

\begin{proof}[Proof of Prop. \ref{prop: negative transpose actions}]
We prove Part 1. Part 2 is similar.

Let  $\mathscr{B} = \{ \frac{1}{\|P \|} e_{P}\} $ and let 
$\mathscr{B}^{*} = \{ \frac{(-1)^{\ell(P^{*})}}{\|  P^{*} \|}e_{P*}\}$. As in Definition \ref{def: support negative transpose actions} we write
  
\begin{align*}
  A &= [\rho_{\lambda} (X_{\alpha_k})]_{\mathscr{B}\leftarrow
        \mathscr{B}}\\
  B &= [\rho_{\lambda^{*}} (X_{\alpha_k})]_{\mathscr{B}^{*}\leftarrow
        \mathscr{B}^{*}}
\end{align*}

We introduce one more basis and matrix. Let$\mathscr{B}_{+}^{*} = \{ \frac{1}{\|  P^{*} \|}e_{P*}\}$, and let
\[
  B^{+} = [\rho_{\lambda^{*}} (X_{\alpha_k})]_{\mathscr{B}_{+}^{*}\leftarrow
        \mathscr{B}_{+}^{*}}
\]

Suppose $P$ and $P+\delta_{k,i}$ are valid patterns. Write $P' =
(P+\delta_{k,i})^{*}$. This prime notation will only be used in this
proof. Then $P'$ is related to $P^{*}$ by $P'+\delta_{k,k+1-i} = P^{*}$. Write $i' = k+1-i$. Suppose $P$ has index $I$ in $GT(\lambda)$, and $P+\delta_{k,i}$ has index $J$. Then the matrix entry $a_{I,J}$ is the  coefficient of $\frac{1}{\| P + \delta_{k,i}\|} e_{P + \delta_{k,i}}$ in $X_{\alpha_k} \frac{1}{\| P \|} e_P$, and the matrix entry $b_{J,I}^{+}$ is the coefficient of $\frac{1}{\| P^*\|} e_{P^*}$ in $X_{\alpha_k} \frac{1}{\| P' \|} e_{P'}$. We show first that $a_{I,J} = b_{J,I}^{+}$. Then, at the end, we show that changing from the basis $\mathscr{B}_{+}^{*}$ to the basis $\mathscr{B}^{*}$ gives the extra minus sign needed to get the negative transpose.

By Proposition \ref{prop: matrix entries}, we have
\begin{align*}
a_{I,J} &=    \sqrt{\frac{(-1)
      \displaystyle \prod_{q=1}^{k-1}(l_{k,i}-l_{k-1,q}+1)
      \displaystyle \prod_{q=1}^{k+1}
      (l_{k,i}-l_{k+1,q})}{\displaystyle
      \prod_{q=1}^{k}(l_{k,i}-l_{k,q}+1) \displaystyle \prod_{\substack{q=1,\\q\neq i}}^{k} (l_{k,i}-l_{k,q})}}\\
b_{J,I} &=    \sqrt{\frac{(-1)
      \displaystyle \prod_{q'=1}^{k-1}(l'_{k,i'}-l'_{k-1,q'}+1)
      \displaystyle \prod_{q'=1}^{k+1}
      (l'_{k,i'}-l'_{k+1,q'})}{\displaystyle
      \prod_{q'=1}^{k}(l'_{k,i'}-l'_{k,q'}+1) \displaystyle \prod_{\substack{q'=1,\\q'\neq i}}^{k} (l'_{k,i'}-l'_{k,q})}}
\end{align*}

First, we show that
\begin{equation} \label{eqn: goal 1}
\prod_{q=1}^{k-1}(l_{k,i}-l_{k-1,q}+1) = (-1)^{k-1}\prod_{q'=1}^{k-1}(l'_{k,i'}-l'_{k-1,q'}+1)
\end{equation}
via the map $q' \mapsto k-q$.

We rewrite the left hand side of (\ref{eqn: goal 1}) as follows.
\begin{align*}
\prod_{q=1}^{k-1}(l_{k,i}-l_{k-1,q}+1) &= \prod_{q=1}^{k-1}((x_{k,i}-i+1)-(x_{k-1,q}-q+1)+1)\\ 
&= \prod_{q=1}^{k-1}(x_{k,i}-x_{k-1,q}+q-i+1). 
\end{align*}
Next, we rewrite the right hand side of (\ref{eqn: goal 1}).
\begin{align*}
\prod_{q'=1}^{k-1}(l'_{k,i'}-l'_{k-1,q'}+1) &= \prod_{q'=1}^{k-1}((y_{k,i'}-i'+1)-(y_{k-1,q'}-q'+1)+1) \\
&= \prod_{q'=1}^{k-1}(y_{k,i'}-y_{k-1,q'}+(q'-i')+1) \\
&= \prod_{q'=1}^{k-1}((x_{n,1}-(x_{k,k+1-i'}+1))-(x_{n,1} - x_{k-1,k-q'})+(q'-i')+1) \\
&= \prod_{q'=1}^{k-1}(-x_{k,k+1-i'}+x_{k-1,k-q'}+(q'-i')) \\
&= \prod_{q=1}^{k-1}(-x_{k,i}+x_{k-1,q}-q+i-1). \\
&= (-1)^{k-1}\prod_{q=1}^{k-1}(x_{k,i}-x_{k-1,q}+q-i+1).    
\end{align*}
This proves (\ref{eqn: goal 1}).

Similar arguments lead to the following equations.
\begin{align*}
  \prod_{q=1}^{k+1} (l_{k,i}-l_{k+1,q}) &= (-1)^{k+1} \prod_{q'=1}^{k+1}(l'_{k,i'}-l'_{k+1,q'}) &\text{ via } q' \mapsto k+2-q;\\
  \prod_{q=1}^{k}(l_{k,i}-l_{k,q}+1) &=  (-1)^{k-1}\prod_{\substack{q'=1,\\q'\neq i}}^{k} (l'_{k,i'}-l'_{k,q}) &\text{ via } q' \mapsto k+1-q;\\
  \prod_{\substack{q=1,\\q\neq i}}^{k} (l_{k,i}-l_{k,q}) &=  (-1)^{k-1}\prod_{q'=1}^{k}(l'_{k,i'}-l'_{k,q'}+1) &\text{ via } q' \mapsto k+1-q.
\end{align*} 

Combining these formulas, we have $a_{I,J} = b_{J,I}^{+}$.

Finally, we argue that $b_{J,I} = -b_{J,I}^{+}.$ The patterns $P'$ and $P*$ satisfy $wt(P^{*}) = wt(P') + \alpha_k$. Thus the levels $\ell(P^{*})$ and $\ell(P')$ have opposite parity, and hence  $b_{J,I} = -b_{J,I}^{+}.$
\end{proof}

\subsection{Rationality of the coefficients}

It turns out that the expressions $\| P \| \|P^{*} \|$ appearing in
the formula for the invariant are rational numbers. This is useful
when computing, as it allows us to do exact calculations over
$\mathbb{Q}$ rather than approximate calculations over $\mathbb{R}$.

\begin{proposition}
\[
  \| P \| \|P^{*} \| = \prod_{t=2}^{n} \frac{\displaystyle \prod_{1
      \leq p < q \leq t}(l_{t,p}-l_{t,q}-1)!}{\displaystyle \prod_{1 \leq p \leq q < t}(l_{t-1,p}-l_{t-1,q})!}
\]
\end{proposition}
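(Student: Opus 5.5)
The plan is to compute $\|P^{*}\|^2$ directly from the definition of the norm and to compare its four factors $N_1^{*}, D_1^{*}, N_2^{*}, D_2^{*}$ with the factors $N_1, D_1, N_2, D_2$ of $\|P\|^2$. I expect the dual pattern to simply permute these factors: $N_2^{*}=N_2$, $D_1^{*}=D_1$, $N_1^{*}=D_2$ and $D_2^{*}=N_1$. Granting this,
\[
\|P\|^2\|P^{*}\|^2=\frac{N_1N_2}{D_1D_2}\cdot\frac{D_2N_2}{D_1N_1}=\left(\frac{N_2}{D_1}\right)^2 .
\]
Both norms are positive, and $N_2/D_1$ is a ratio of factorials of nonnegative integers, so it is positive. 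Taking positive square roots then gives exactly the stated formula.

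The first step is to record how the shifted entries transform. I read the dual pattern as reversing each row and complementing with respect to $x_{n,1}$, namely $y_{k,j}=x_{n,1}-x_{k,k+1-j}$; this is the convention implicitly used in the proof of Proposition \ref{prop: negative transpose actions}, where $i'=k+1-i$. Write $l^{*}_{k,j}=y_{k,j}-j+1$. Since $l_{k,k+1-j}=x_{k,k+1-j}-k+j$, a short computation gives
\[
l^{*}_{k,j}=x_{n,1}-k+1-l_{k,k+1-j}.
\]

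The second step treats differences within one row. Here the constant $x_{n,1}-k+1$ cancels, leaving $l^{*}_{t,p}-l^{*}_{t,q}=l_{t,t+1-q}-l_{t,t+1-p}$. Under the bijection $(p,q)\mapsto(t+1-q,\,t+1-p)$, the index range $1\le p<q\le t$ is preserved, so $N_2^{*}=N_2$. The same argument in row $t-1$, using the range $1\le p\le q<t$, gives $D_1^{*}=D_1$.

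The third step, which I expect to be the main obstacle because of the index bookkeeping, treats differences between adjacent rows. The two row-dependent constants now differ by $1$, and one finds
\[
l^{*}_{t,p}-l^{*}_{t-1,q}=l_{t-1,t-q}-l_{t,t+1-p}-1 .
\]
Set $a=t-q$ and $b=t+1-p$. The condition $1\le p\le q<t$ becomes $1\le a<b\le t$. This is exactly the index set of $D_2$, whose factors are $(l_{t-1,a}-l_{t,b}-1)!$, so $N_1^{*}=D_2$. I will check carefully that the endpoints match, in particular that $a\le t-1$ so that $l_{t-1,a}$ is defined. Because the operation $P\mapsto P^{*}$ is an involution, applying the same identity to $P^{*}$ gives $D_2^{*}=N_1$, with no separate computation needed. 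Combining the three steps completes the proof.
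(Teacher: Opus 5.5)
Your proposal is correct and follows the paper's proof. You show that passing to the dual pattern permutes the four factorial products ($N_2^{*}=N_2$, $D_1^{*}=D_1$, $N_1^{*}=D_2$, $D_2^{*}=N_1$), using the same reindexing $a=t-q$, $b=t+1-p$ as the paper, and then take the positive square root of $(N_2/D_1)^2$. You also deduce $D_2^{*}=N_1$ from $N_1^{*}=D_2$ via the involution $P^{**}=P$, which holds because $x_{n,n}=0$; this neatly replaces the paper's ``similar arguments'' for that equation.
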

\begin{proof}
  Write $\| P \| = \frac{N_1 N_2}{D_1 D_2}$ and $\| P^{*} \| = \frac{N_1^{*} N_2^{*}}{D_1^{*} D_2^{*}}$. We will show that $N_1^{*} = D_2$, $D_1^{*} = D_1$, $N_2^{*}=N_2$, and $D_2^{*} =N_1$. Thus
  \[
    \| P \| \|P^{*} \| = \sqrt{\frac{N_2^2}{D_1^2}} = \left| \frac{N_2}{D_1} \right| = \frac{N_2}{D_1}.
\] 
  
Consider
\[
  D_2 = \displaystyle\prod_{t=2}^{n} \prod_{1 \leq p < q \leq t} (l_{t-1,p} - l_{t,q}-1)!
\]
 and
\[
N_1^{*} = \displaystyle\prod_{t=2}^{n}\prod_{1 \leq p^{*} \leq q^{*} < t} (l^{*}_{t,p^{*}} - l^{*}_{t-1,q^{*}})! 
\]

We have
\begin{align*}
  (l_{t-1,p} - l_{t,q}-1) &= x_{t-1,p}-p+1 - (x_{t,q}-q+1)-1\\
  &=x_{t-1,p}-x_{t,q} -p+q-1.
\end{align*}

Applying the map $p = t-q^{*}$ and $q = t+1-p^{*}$ we have
\begin{align*}
  l^{*}_{t,p^{*}} - l^{*}_{t-1,q^{*}} &= l^{*}_{t,p^{*}} - l^{*}_{t-1,q^{*}} \\
  &= y_{t,p^{*}}-p^{*}+1-(  y_{t-1,q^{*}} -q^{*}+1)\\
  &= (x_{n,1} - x_{t,t+1-p^{*}})-(x_{n,1} - x_{t-1,t-q^{*}})+(q^{*}-p^{*})\\
  &= -x_{t,q}+x_{t-1,p}+q-p-1.
\end{align*}
Thus, $N_1^{*} = D_2$.

Similar arguments establish the remaining equations $D_1^{*} = D_1$, $N_2^{*}=N_2$, and $D_2^{*} =N_1$. 

\end{proof}

\section*{References}
\bibliographystyle{amsplain}

\end{document}